\newtheorem{theorem}{Theorem}[section]
\newtheorem{lemma}[theorem]{Lemma}
\newtheorem{corollary}[theorem]{Corollary}
\theoremstyle{definition}
\theoremstyle{remark}
\newtheorem{remark}[theorem]{Remark}
\numberwithin{equation}{section}
\begin{document}
\setcounter{page}{1}

\title{On statistical approximation properties of $(p,q)$-Bleimann-Butzer-Hahn operators}
\author[M. Mursaleen and Taqseer Khan ]{M. Mursaleen$^1$ and Taqseer Khan$^2$}

\address{$^{1}$ Department of Mathematics, Aligarh Muslim University, Aligarh--202002, India.}
\email{\textcolor[rgb]{0.00,0.00,0.84}{mursaleenm@gmail.com}}

\address{$^{2}$ Department of Pure Mathematics, Aligarh Muslim University, Aligarh--202002, India.}
\email{\textcolor[rgb]{0.00,0.00,0.84}{taqi.khan91@gmail.com}}


\subjclass[2010]{Primary 41A10; Secondary 441A25, 41A36.}

\keywords{$(p,q)$-integers; $(p,q)$%
-Bernstein operators; $(p,q)$-Bleimann-Butzer-Hahn operators; $q$%
-Bleimann-Butzer-Hahn operators; statistical convergence; modulus of continuity, bivariate operators.}


\begin{abstract}
The aim of this paper is to introduce a generalization of the $(p,q)$-Bleimann-Butzer-Hahn operators based on $(p,q)$-integers and obtain
Korovkin's type statistical approximation theorem for these operators. Also, we
establish the rate of convergence of these operators using the modulus of continuity. Furthermore, we introduce $(p,q)$-Bleimann-Butzer-Hahn bivariate operators.
\end{abstract} \maketitle
\section{Introduction and Preliminaries}
In order to approximate continuous functions defined on the positive half axis, Bleimann, Butzer and Hahn (BBH) introduced, in 1980, the following linear positive operators in \cite%
{brns};
\begin{align}\label{nas1}
L_n (f;x)= \frac{1}{(1+x)^n}\sum_{k=0}^n f\left(\frac{k}{n-k+1} \right)\left[
\begin{array}{c}
n \\
k%
\end{array}%
\right] x^k, x\geq 0
\end{align}
\parindent=8mmThe advent of $q$-calculus created a new venue of research in approximation theory. Lupa\c{s} \cite{lups} introduced the first $q$-analogue of the Bernstein polynomials in 1987. Phillips \cite{philip} presented another modification of Bernstein polynomials in 1997. He also established results for the convergence and the Voronovskaja's type asymptotic
expansion for these operators.

The $q$-analogue of the BBH-type operators is defined as
\begin{align}\label{nas2}
L_{n}^{q}(f;x)=\frac{1}{\ell _{n}(x)}\sum_{k=0}^{n}f\left( \frac{[k]_{q}}{%
[n-k+1]_{q}q^{k}}\right) q^{\frac{k(k-1)}{2}}\left[
\begin{array}{c}
n \\
k%
\end{array}%
\right] _{q}x^{k}
\end{align}
where $\ell _{n}(x)=\prod_{k=0}^{n-1}(1+q^{s}x)$.\newline
In recent decades, the concept of $(p,q)$-calculus has also been introduced. Many researchers have used $(p,q)$-calculus to establish new and interesting results in approximation theory. Recently, Mursaleen et al \cite{mur7} introduced the first $(p,q)$-analogue of Bernstein operators and $(p,q)$-analogue of Bernstein-Stancu operators \cite{mur8}. They have investigated the approximation properties and convergence properties of these operators.
\newline

Let us give rudiments of $(p,q)$-calculus.

The $(p,q)$ integers $[n]_{p,q}$ are defined by
\begin{equation*}
[n]_{p,q}=\frac{p^n-q^n}{p-q},~~~~~~~n=0,1,2,\cdots, ~~0<q<p\leq 1.
\end{equation*}
whereas $q$-integers are given by
\begin{equation*}
[n]_{q}=\frac{1-q^n}{1-q},~~~~~~~n=0,1,2,\cdots, ~~0<q< 1.
\end{equation*}%
\newline
It is very clear that the two concepts are different but the former is a generalization of the later.\newline

Also, we have $(p,q)$%
-binomial expansion as follows
\begin{equation*}
(ax+by)_{p,q}^{n}:=\sum\limits_{k=0}^{n}p^{\frac{(n-k)(n-k-1)}{2}}q^{\frac{%
k(k-1)}{2}} \left[
\begin{array}{c}
n \\
k%
\end{array}%
\right] _{p,q}a^{n-k}b^{k}x^{n-k}y^{k},
\end{equation*}
\begin{equation*}
(x+y)_{p,q}^{n}=(x+y)(px+qy)(p^2x+q^2y)\cdots (p^{n-1}x+q^{n-1}y),
\end{equation*}
\begin{equation*}
(1-x)_{p,q}^{n}=(1-x)(p-qx)(p^2-q^2x)\cdots (p^{n-1}-q^{n-1}x)
\end{equation*}%
\newline
and the $(p,q)$-binomial coefficients are defined by
\begin{equation*}
\left[
\begin{array}{c}
n \\
k%
\end{array}%
\right] _{p,q}=\frac{[n]_{p,q}!}{[k]_{p,q}![n-k]_{p,q}!}.
\end{equation*}

By some simple calculation, we have the following relation

\begin{equation*}
q^k[n-k+1]_{p,q}=[n+1]_{p,q}-p^{n-k+1}[k]_{p,q}.
\end{equation*}

For details on $q$-calculus and $(p,q)$-calculus, one is referred to \cite{vp} and
\cite{mah,sad,vivek} respectively.\\

The concept of statistical convergence was introduced by Fast \cite{fast} in the circa 1950 and in recent times it has become an active area of research. The concept of the limit of a sequence has been generalized to a statistical limit through the natural density of a set $K$ of positive integers, defined as
\begin{equation*}
\delta(K) = \lim_{n \to \infty}\frac{1}{n} \{k\leqslant n ~for ~k\in K\}
\end{equation*}
provided this limit exists \cite{niv}. We say that the sequence $x= (x_n)$ statistically converges to a number $l$, if for each $\varepsilon>0$, the density of the set $\{k : |x_k-l|\geqslant\varepsilon\}$ is zero. We denote it by $st-\lim_k x_x = l$. It is easily seen that every convergent sequence is statistically convergent but not inversely.\\

The main purpose of this paper is to introduce a modification of the operators defined by Mursaleen et al. \cite{mur18} and investigate statistical approximation properties of the operators with the aid of Korovkin type theorem and estimate the rate of their statistical convergence.

Now based on $(p,q)$-integers, we construct $(p,q)$-analogue of
BBH operators, and we call them as $(p,q)$-Bleimann-Butzer-Hahn Operators and
investigate their Korovokin's type statistical approximation properties by using the test
functions $\left(\frac{t}{1+t}\right)^\nu$ for $\nu=0,1,2$. Also for a space
of generalized Lipschitz-type maximal functions we give a pointwise
estimation.

Let $C_{B}(\mathbb{R}_+)$ be the set of all bounded and continuous functions
on $\mathbb{R}_+$, then $C_{B}(\mathbb{R}_+)$ is linear normed space with
\begin{equation*}
\parallel f \parallel_{C_{B}}= \sup_{x \geq 0} \mid f(x) \mid.
\end{equation*}
Let $\omega$ denotes modulus of continuity satisfying the following
condition:
\begin{enumerate}
\item $\omega$ is a non-negative increasing function on $\mathbb{R}%
_+$

\item $\omega(\delta_1+\delta_2)\leq
\omega(\delta_1)+\omega(\delta_2)$

\item $\lim_{\delta \to 0}\omega(\delta)=0$.
\end{enumerate}
Let ${H}_\omega$ be the space of all real-valued functions $f$ defined on
the semiaxis $\mathbb{R}_+$ satisfying the condition
\begin{equation*}
\mid f(x) - f(y) \mid \leq \omega \left(\bigg{|} \frac{x}{1+x}- \frac{y}{1+y}
\bigg{|} \right),
\end{equation*}
for any $x,y \in \mathbb{R}_+$.
\begin{theorem}\label{main}\protect\cite{butz}
Let $\{A_n\}$ be the sequence of positive linear operators from $H_\omega$
into $C_B(\mathbb{R}_+)$, satisfying the conditions
\begin{equation*}
\lim_{n \to \infty} \parallel A_n \left( \left( \frac{t}{1+t}%
\right)^\nu;x\right)-\left(\frac{x}{1+x}\right)^\nu \parallel_{C_{B}},
\end{equation*}
for $\nu=0,1,2$. Then for any function $f \in H_\omega$
\begin{equation*}
\lim_{n \to \infty} \parallel A_n (f)-f \parallel_{C_{B}}=0.
\end{equation*}
\end{theorem}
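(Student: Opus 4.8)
The plan is to carry out the classical Korovkin estimate, transported to the compactified variable $u=\tfrac{x}{1+x}$, which maps $\mathbb{R}_+$ onto $[0,1)$ and in terms of which both the hypotheses and the definition of $H_\omega$ are naturally phrased. First I would record two elementary facts. Every $f\in H_\omega$ is bounded on $\mathbb{R}_+$: fixing a reference point in the defining inequality of $H_\omega$ and using that $\bigl|\tfrac{x}{1+x}-\tfrac{y}{1+y}\bigr|\le 1$ together with the monotonicity of $\omega$, one gets $\|f\|_{C_B}\le |f(0)|+\omega(1)<\infty$. Second, for every $\delta>0$ and every $\lambda\ge 0$ the subadditivity and monotonicity of $\omega$ give $\omega(\lambda\delta)\le(1+\lambda)\,\omega(\delta)$, whence, for all $t,x\ge 0$,
\[
\omega\!\left(\Bigl|\tfrac{t}{1+t}-\tfrac{x}{1+x}\Bigr|\right)\le\left(1+\tfrac{1}{\delta}\Bigl|\tfrac{t}{1+t}-\tfrac{x}{1+x}\Bigr|\right)\omega(\delta).
\]

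Next, fix $f\in H_\omega$ and $x\ge 0$, and set $\varphi_x(t):=\tfrac{t}{1+t}-\tfrac{x}{1+x}$. Using the linearity of $A_n$,
\[
A_n(f;x)-f(x)=A_n\bigl(f(t)-f(x);x\bigr)+f(x)\bigl(A_n(1;x)-1\bigr).
\]
Applying positivity of $A_n$, the membership $f\in H_\omega$, the displayed modulus estimate, and then the Cauchy--Schwarz inequality for positive linear functionals to the term $A_n(|\varphi_x|;x)$, one obtains
\[
\bigl|A_n(f;x)-f(x)\bigr|\le\omega(\delta)\,A_n(1;x)+\frac{\omega(\delta)}{\delta}\bigl(A_n(\varphi_x^2;x)\bigr)^{1/2}\bigl(A_n(1;x)\bigr)^{1/2}+\|f\|_{C_B}\,\bigl|A_n(1;x)-1\bigr|.
\]
The three test functions enter through the expansion
\[
A_n(\varphi_x^2;x)=A_n\!\Bigl(\bigl(\tfrac{t}{1+t}\bigr)^2;x\Bigr)-2\tfrac{x}{1+x}\,A_n\!\bigl(\tfrac{t}{1+t};x\bigr)+\bigl(\tfrac{x}{1+x}\bigr)^2A_n(1;x),
\]
which, after adding and subtracting the limit monomials $\bigl(\tfrac{x}{1+x}\bigr)^\nu$ and using $0\le\tfrac{x}{1+x}<1$, is dominated \emph{uniformly in $x$} by $\mu_n:=\gamma_n+2\beta_n+\alpha_n$, where $\alpha_n:=\|A_n(1;x)-1\|_{C_B}$, $\beta_n:=\|A_n(\tfrac{t}{1+t};x)-\tfrac{x}{1+x}\|_{C_B}$ and $\gamma_n:=\|A_n((\tfrac{t}{1+t})^2;x)-(\tfrac{x}{1+x})^2\|_{C_B}$. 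By hypothesis $\alpha_n,\beta_n,\gamma_n\to 0$, so $\mu_n\to 0$.

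Finally I would combine the bounds. Since $A_n(1;x)\le 1+\alpha_n$, taking the supremum over $x\ge 0$ yields, for every $\delta>0$,
\[
\|A_n(f)-f\|_{C_B}\le\omega(\delta)(1+\alpha_n)+\frac{\omega(\delta)}{\delta}\sqrt{\mu_n}\,\sqrt{1+\alpha_n}+\|f\|_{C_B}\,\alpha_n.
\]
Choosing $\delta=\delta_n:=\sqrt{\mu_n}$ (when $\mu_n=0$ the middle term already vanishes, so one may take $\delta_n\to 0$ arbitrarily) turns the middle term into $\omega(\delta_n)\sqrt{1+\alpha_n}$; since $\omega(\delta_n)\to 0$ by property (3) of the modulus of continuity while $\alpha_n\to 0$, all three terms tend to $0$, giving $\lim_{n\to\infty}\|A_n(f)-f\|_{C_B}=0$. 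The only point demanding care is the bookkeeping of the second-moment expansion, so that its majorant is simultaneously uniform in $x$ and expressed purely through $\alpha_n,\beta_n,\gamma_n$; once that is done the rest is the textbook Korovkin argument written in the variable $\tfrac{x}{1+x}$.
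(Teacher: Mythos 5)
Your proof is correct and complete: the reduction to the compactified variable $\tfrac{x}{1+x}$, the bound $\omega(\lambda\delta)\le(1+\lambda)\omega(\delta)$, the Cauchy--Schwarz step, and the uniform domination of $A_n(\varphi_x^2;x)$ by $\gamma_n+2\beta_n+\alpha_n$ followed by the choice $\delta_n=\sqrt{\mu_n}$ are all sound. Note, however, that the paper does not prove this theorem at all --- it is quoted verbatim (with a typographical omission of ``$=0$'' in the hypothesis) from the cited reference of Gadjiev and \c{C}akar --- so there is no in-paper argument to compare against; your write-up is simply the standard Korovkin-type estimate transported to the test functions $\bigl(\tfrac{t}{1+t}\bigr)^{\nu}$, which is exactly what that source does.
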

Now we introduce $(p,q)$-Bleimann-Butzer-Hahn type operators based on $(p,q)$%
-integers as follows:
\begin{align}\label{nas3}
L_n^{p,q}(f;x)=\frac{pq}{\ell_n^{p,q}(x)}\sum_{k=0}^n f \left( \frac{
p^{n-k+1}[k]_{p,q}}{[n-k+1]_{p,q}q^k }\right) p^{\frac{(n-k)(n-k-1)}{2}}q^{%
\frac{k(k-1)}{2}} \left[
\begin{array}{c}
n \\
k%
\end{array}%
\right] _{p,q} x^k
\end{align}
where, ~~$x \geq 0,~~0< q<p\leq 1$
\begin{equation*}
\ell_n^{p,q}(x)= \prod_{s=0}^{n-1}(p^s+q^s x)
\end{equation*}
and $f$ is defined on semiaxis $\mathbb{R}_+$.\newline
And also by induction, we construct the Euler identity based on $(p,q)$%
-analogue defined as follows:
\begin{align}\label{nas4}
\prod_{s=0}^{n-1}(p^s+q^s x) =\sum_{k=0}^n p^{\frac{(n-k)(n-k-1)}{2}}q^{%
\frac{k(k-1)}{2}} \left[
\begin{array}{c}
n \\
k%
\end{array}%
\right] _{p,q} x^k
\end{align}

\parindent=8mmIf we put $p=1$, then we obtain $q$-BBH operators. In \eqref{nas3},
if we take $f\left( \frac{[k]_{p,q}}{[n-k+1]_{p,q}}\right) $ in place of $%
f\left( \frac{p^{n-k+1}[k]_{p,q}}{[n-k+1]_{p,q}q^{k}}\right) $ ,
then we obtain the usual generalization of Bleimann, Butzer and Hahn operators
based on $(p,q)$-integers and  then it is not possible to obtain
explicit expressions for the monomials $t^{\nu }$ and $\left( \frac{t}{1+t}%
\right) ^{\nu }$ for $\nu =1,2$. Explicit formulas for the
monomials $\left( \frac{t}{1+t}\right) ^{\nu }$ for $\nu =0,1,2$ are obtainable only if we define the Bleimann, Butzer and
Hahn operators as in \eqref{nas3}. It is to note that these operators are more flexible than the classical
BBH operators and $q$-analogue of BBH operators. That is depending on the
selection of $(p,q)$-integers, the rate of convergence of $(p,q)$%
-BBH operators is  as good as the classical one atleast.

\section{Main results}

\begin{lemma}\label{main1}
Let $L_n^{p,q}(f;x)$ be given by \eqref{nas3}, then for any $x \geq 0$ and $%
0<q<p\leq 1$ we have the following identities

\begin{enumerate}
\item $L_n^{p,q}(1;x)=pq,$

\item\label{main2} $L_n^{p,q}(\frac{t}{1+t};x)=\frac{p^2q[n]_{p,q}}{[n+1]_{p,q}}%
\left(\frac{x}{1+x}\right),$

\item \label{main4}$L_n^{p,q}\left((\frac{t}{1+t})^2;x\right)=\frac{%
p^2q^3[n]_{p,q}[n-1]_{p,q}}{[n+1]_{p,q}^2}\frac{x^2}{(1+x)(p+qx)} +\frac{%
p^{n+2}q[n]_{p,q}}{[n+1]_{p,q}^2}\left(\frac{x}{1+x}\right).$
\end{enumerate}
\end{lemma}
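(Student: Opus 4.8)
The plan is to verify the three identities by direct computation, exploiting the Euler-type identity \eqref{nas4} and the key relation $q^k[n-k+1]_{p,q}=[n+1]_{p,q}-p^{n-k+1}[k]_{p,q}$ stated in the preliminaries. For (1), I would observe that the argument of $f$ is irrelevant, so $L_n^{p,q}(1;x)=\frac{pq}{\ell_n^{p,q}(x)}\sum_{k=0}^n p^{\frac{(n-k)(n-k-1)}{2}}q^{\frac{k(k-1)}{2}}\left[\begin{array}{c}n\\k\end{array}\right]_{p,q}x^k$, and then apply \eqref{nas4} to recognize the sum as exactly $\ell_n^{p,q}(x)=\prod_{s=0}^{n-1}(p^s+q^sx)$, leaving $pq$.

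For (2), the crucial simplification is that at the node $t=\frac{p^{n-k+1}[k]_{p,q}}{[n-k+1]_{p,q}q^k}$ one computes $\frac{t}{1+t}=\frac{p^{n-k+1}[k]_{p,q}}{[n-k+1]_{p,q}q^k+p^{n-k+1}[k]_{p,q}}=\frac{p^{n-k+1}[k]_{p,q}}{[n+1]_{p,q}}$, using precisely the relation $q^k[n-k+1]_{p,q}+p^{n-k+1}[k]_{p,q}=[n+1]_{p,q}$. Substituting this in, the $k=0$ term vanishes, and using $[k]_{p,q}\left[\begin{array}{c}n\\k\end{array}\right]_{p,q}=[n]_{p,q}\left[\begin{array}{c}n-1\\k-1\end{array}\right]_{p,q}$ together with a shift of summation index $k\mapsto k+1$, I would reduce the remaining sum to a copy of the Euler identity \eqref{nas4} of order $n-1$ (after carefully tracking the powers of $p$ and $q$ in the exponents $\frac{(n-k)(n-k-1)}{2}$ and $\frac{k(k-1)}{2}$), which evaluates to $\ell_{n-1}^{p,q}(qx)=\prod_{s=0}^{n-2}(p^s+q^{s+1}x)$. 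Dividing by $\ell_n^{p,q}(x)=\prod_{s=0}^{n-1}(p^s+q^sx)$ should produce a telescoping-type cancellation leaving the stated factor $\frac{p^2q[n]_{p,q}}{[n+1]_{p,q}}\cdot\frac{x}{1+x}$; the bookkeeping of the $p$-power exponents is where I expect the arithmetic to be most delicate.

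For (3), I would write $\left(\frac{t}{1+t}\right)^2=\frac{p^{2(n-k+1)}[k]_{p,q}^2}{[n+1]_{p,q}^2}$ at the node, then split $[k]_{p,q}^2$ using the standard device $[k]_{p,q}^2=[k]_{p,q}[k-1]_{p,q}q + p^{k-1}[k]_{p,q}$ (which follows from $[k]_{p,q}=[k-1]_{p,q}p+q^{k-1}=p[k-1]_{p,q}+q^{k-1}$ and rearranging), producing two sums. Each sum is handled as in part (2): the term with $[k]_{p,q}[k-1]_{p,q}$ uses $[k]_{p,q}[k-1]_{p,q}\left[\begin{array}{c}n\\k\end{array}\right]_{p,q}=[n]_{p,q}[n-1]_{p,q}\left[\begin{array}{c}n-2\\k-2\end{array}\right]_{p,q}$ and a double index shift to yield a factor involving $\frac{[n]_{p,q}[n-1]_{p,q}}{[n+1]_{p,q}^2}$ and a reduced Euler product over $n-2$ terms, while the linear term reproduces the part (2) computation but carries an extra $p^{n}$-type weight from the $p^{2(n-k+1)}$ and $p^{k-1}$ factors, accounting for the $p^{n+2}$ in the second summand. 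Matching the residual products $\ell_{n-2}^{p,q}(q^2x)/\ell_n^{p,q}(x)$ against $\frac{1}{(1+x)(p+qx)}$ completes the identification.

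The main obstacle throughout is the precise handling of the $(p,q)$-exponents: after each index shift $k\mapsto k+1$ or $k\mapsto k+2$, the Gaussian-binomial exponents $\frac{(n-k)(n-k-1)}{2}$ and $\frac{k(k-1)}{2}$ shift in a way that must be reconciled with the exponents appearing in the lower-order Euler identity \eqref{nas4}, and the surviving power of $p$ (and the rescaling $x\mapsto qx$ or $x\mapsto q^2x$ inside the product) has to be tracked exactly to land on the stated constants $p^2q$, $p^2q^3$, and $p^{n+2}q$. I would organize the computation so that all such exponent arithmetic is collected into a single lemma-internal identity before substituting, to keep the algebra transparent.
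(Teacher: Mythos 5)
Your proposal follows essentially the same route as the paper: part (1) via the Euler identity \eqref{nas4}, part (2) via the node simplification $\frac{t}{1+t}=\frac{p^{n-k+1}[k]_{p,q}}{[n+1]_{p,q}}$, the absorption $[k]_{p,q}\left[\begin{smallmatrix}n\\k\end{smallmatrix}\right]_{p,q}=[n]_{p,q}\left[\begin{smallmatrix}n-1\\k-1\end{smallmatrix}\right]_{p,q}$ and an index shift back to \eqref{nas4} at $qx$, and part (3) via the same splitting $[k]_{p,q}^2=q[k]_{p,q}[k-1]_{p,q}+p^{k-1}[k]_{p,q}$ into two sums reduced to Euler identities of orders $n-2$ and $n-1$. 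The only quibble is that this splitting follows from the recurrence $[k]_{p,q}=p^{k-1}+q[k-1]_{p,q}$ rather than from $[k]_{p,q}=p[k-1]_{p,q}+q^{k-1}$ as your parenthetical suggests, but the decomposition you actually use is the correct one.
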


\begin{proof}
\begin{enumerate}
\item $%
L_{n}^{p,q}\left( 1;x\right) =\frac{pq}{\ell _{n}^{p,q}(x)}\sum_{k=1}^{n}%
 p^{\frac{(n-k)(n-k-1)}{2}}q^{\frac{k(k-1)}{2}}\left[
\begin{array}{c}
n \\
k%
\end{array}%
\right] _{p,q}x^{k}$ \\ \newline

but for $0<q<p\leq 1$, we have
\begin{equation*}
\sum_{k=0}^{n}p^{\frac{(n-k)(n-k-1)}{2}}q^{\frac{k(k-1)}{2}}\left[
\begin{array}{c}
n \\
k%
\end{array}%
\right] _{p,q}x^{k}=\prod_{s=0}^{n-1}(p^{s}+q^{s}x)=\ell_{n}^{p,q}(x),
\end{equation*}
so
\begin{equation*}
L_n^{p,q}(1;x)=\frac{pq}{l_n^{p,q}(x)}\times l_n^{p,q}(x)=pq.
\end{equation*}
This proves (1).\\ \newline

\item Let $t=\frac{p^{n-k+1}[k]_{p,q}}{[n-k+1]_{p,q}q^{k}}$, then $%
\frac{t}{t+1}=\frac{[k]_{p,q}p^{n+1-k}}{[n+1]_{p,q}}$
\begin{eqnarray*}
L_{n}^{p,q}\left( \frac{t}{1+t};x\right)  &=&\frac{pq}{\ell _{n}^{p,q}(x)}%
\sum_{k=1}^{n}\frac{[k]_{p,q}p^{n-k+1}}{[n+1]_{p,q}}p^{\frac{(n-k)(n-k-1)}{2}%
}q^{\frac{k(k-1)}{2}}\left[
\begin{array}{c}
n \\
k%
\end{array}%
\right] _{p,q}x^{k} \\
&=&\frac{pq}{\ell _{n}^{p,q}(x)}\sum_{k=1}^{n}\frac{[n]_{p,q}p^{n-k+1}}{%
[n+1]_{p,q}}p^{\frac{(n-k)(n-k-1)}{2}}q^{\frac{k(k-1)}{2}}\left[
\begin{array}{c}
n-1 \\
k-1%
\end{array}%
\right] _{p,q}x^{k} \\
&=&x\left( \frac{pq}{\ell _{n}^{p,q}(x)}\cdot \frac{\lbrack n]_{p,q}}{%
[n+1]_{p,q}}p\right) \sum_{k=0}^{n-1}p^{\frac{(n-k)(n-k-1)}{2}}q^{\frac{%
k(k-1)}{2}}\left[
\begin{array}{c}
n-1 \\
k%
\end{array}%
\right] _{p,q}(qx)^{k} \\
&=&p^2q\frac{[n]_{p,q}}{[n+1]_{p,q}}\frac{x}{1+x}.
\end{eqnarray*}
This completes the proof of (2).
\item $L_n^{p,q}\left(\frac{t^2}{(1+t)^2};x\right) =\frac{pq}{%
\ell_n^{p,q}(x)}\sum_{k=1}^n \frac{[k]_{p,q}^2p^{2(n-k+1)}}{[n+1]_{p,q}^2}
p^{\frac{(n-k)(n-k-1)}{2}}q^{\frac{k(k-1)}{2}} \left[
\begin{array}{c}
n \\
k%
\end{array}%
\right] _{p,q} x^k$.\newline

Now we have,
\begin{equation*}
[k]_{p,q}=p^{k-1}+q[k-1]_{p,q},~~\mbox{and}~~
[k]_{p,q}^2=q[k]_{p,q}[k-1]_{p,q}+p^{k-1}[k]_{p,q},
\end{equation*}
using it in above, we get
\begin{equation*}
L_n^{p,q}\left(\frac{t^2}{(1+t)^2};x\right) =\frac{pq}{\ell_n^{p,q}(x)}%
\sum_{k=2}^n \frac{q[k]_{p,q}[k-1]_{p,q}p^{2n-2k+2}}{[n+1]_{p,q}^2} p^{\frac{%
(n-k)(n-k-1)}{2}}q^{\frac{k(k-1)}{2}} \left[
\begin{array}{c}
n \\
k%
\end{array}%
\right] _{p,q} x^k
\end{equation*}
\begin{equation*}
+\frac{pq}{\ell_n^{p,q}(x)}\sum_{k=1}^n p^{k-1}\frac{[k]_{p,q}p^{2n-2k+2}}{%
[n+1]_{p,q}^2} p^{\frac{(n-k)(n-k-1)}{2}}q^{\frac{k(k-1)}{2}} \left[
\begin{array}{c}
n \\
k%
\end{array}%
\right] _{p,q} x^k
\end{equation*}
\begin{equation*}
=\frac{pq}{\ell_n^{p,q}(x)} \frac{q[n]_{p,q}[n-1]_{p,q}}{[n+1]_{p,q}^2}
\sum_{k=2}^n p^{\left((2n-2k+2)+\frac{(n-k)(n-k-1)}{2}\right)}q^{\frac{k(k-1)%
}{2}} \left[
\begin{array}{c}
n-2 \\
k-2%
\end{array}%
\right] _{p,q} x^k
\end{equation*}
\begin{equation*}
+\frac{pq}{\ell_n^{p,q}(x)} \frac{[n]_{p,q}}{[n+1]_{p,q}^2}\sum_{k=1}^n
p^{\left((k-1)+(2n-2k+2)+\frac{(n-k)(n-k-1)}{2}\right)}q^{\frac{k(k-1)}{2}} %
\left[
\begin{array}{c}
n-1 \\
k-1%
\end{array}%
\right] _{p,q} x^k
\end{equation*}
\begin{equation*}
=x^2\frac{pq}{\ell_n^{p,q}(x)} \frac{q[n]_{p,q}[n-1]_{p,q}}{[n+1]_{p,q}^2}
\sum_{k=0}^{n-2} p^{\left((2n-2k-2)+\frac{(n-k-2)(n-k-3)}{2}\right)}q^{\frac{%
(k+1)(k+2)}{2}} \left[
\begin{array}{c}
n-2 \\
k%
\end{array}%
\right] _{p,q} x^k
\end{equation*}
\begin{equation*}
+x\frac{pq}{\ell_n^{p,q}(x)} \frac{[n]_{p,q}}{[n+1]_{p,q}^2}\sum_{k=0}^{n-1}
p^{\left(k+(2n-2k)+\frac{(n-k-1)(n-k-2)}{2}\right)}q^{\frac{k(k+1)}{2}} %
\left[
\begin{array}{c}
n-1 \\
k%
\end{array}%
\right] _{p,q} x^k
\end{equation*}
\begin{equation*}
=x^2\frac{pq}{\ell_n^{p,q}(x)} \frac{pq^2[n]_{p,q}[n-1]_{p,q}}{[n+1]_{p,q}^2}
\sum_{k=0}^{n-2} p^{\frac{(n-k)(n-k-1)}{2}}q^{\frac{k(k-1)}{2}} \left[
\begin{array}{c}
n-2 \\
k%
\end{array}%
\right] _{p,q} (q^2x)^k
\end{equation*}
\begin{equation*}
+x\frac{pq}{\ell_n^{p,q}(x)} \frac{p^{n+1}[n]_{p,q}}{[n+1]_{p,q}^2}%
\sum_{k=0}^{n-1} p^{\frac{(n-k)(n-k-1)}{2}}q^{\frac{k(k-1)}{2}} \left[
\begin{array}{c}
n-1 \\
k%
\end{array}%
\right] _{p,q} (qx)^k
\end{equation*}
\begin{equation*}
=\frac{p^2q^3[n]_{p,q}[n-1]_{p,q}}{[n+1]_{p,q}^2}\frac{x^2}{(1+x)(p+qx)} +%
\frac{p^{n+2}q[n]_{p,q}}{[n+1]_{p,q}^2}\left(\frac{x}{1+x}\right).
\end{equation*}
This proves (3).
\end{enumerate}
\end{proof}
\textbf{Korovkin's type approximation properties}

In this section, we obtain the Korovkin's type statistical approximation properties for
the operators defined by \eqref{nas3}, using Theorem \ref{main}.\newline

\parindent=8mm In order to obtain the convergence results for the operators $%
L_{n}^{p,q}$, we take $q=q_{n},~~p=p_{n}$ where $q_{n}\in (0,1)$ and $%
p_{n}\in (q_{n},1]$ satisfy
\begin{align}\label{nas5}
\lim_{n}p_{n}=1,~~~~~~\lim_{n}q_{n}=1
\end{align}

\begin{theorem}
Let $p=p_n and q=q_n$ satisfy \eqref{nas5} for $0<q_n<p_n\leq 1$, and if $%
L_n^{p_n,q_n}$ is defined by \eqref{nas3}, then for any function $f \in H_\omega$,
\begin{equation*}
st-\lim_n \parallel L_n^{p_n,q_n}(f;x)-f \parallel_{C_{B}}=0.
\end{equation*}
\end{theorem}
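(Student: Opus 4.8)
The plan is to verify, for the choice $p=p_n$, $q=q_n$ of \eqref{nas5}, the three test-function conditions of Theorem~\ref{main} and then appeal to the (statistical) Korovkin theorem. Since \eqref{nas5} already furnishes the \emph{ordinary} limits $p_n\to1$, $q_n\to1$, it is enough to prove that each of the three norms below tends to $0$ in the ordinary sense, which a fortiori gives $st$-convergence to $0$. The computations rest on Lemma~\ref{main1}, on the elementary identity $[m]_{p,q}=p^{\,m-1}\sum_{j=0}^{m-1}r^{\,j}$ with $r=q/p\in(0,1)$, and on $0\le\frac{x}{1+x}<1$ for $x\ge0$. The fact I would isolate first is: writing $r_n=q_n/p_n$, one has $\sum_{j=0}^{n}r_n^{\,j}\to\infty$; indeed, for every fixed $M$ and all $n\ge M$ one has $\sum_{j=0}^{n}r_n^{\,j}\ge\sum_{j=0}^{M}r_n^{\,j}\to M+1$ as $n\to\infty$, so $\liminf_n\sum_{j=0}^{n}r_n^{\,j}\ge M+1$ for all $M$. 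Consequently $\sum_{j=0}^{n-1}r_n^{\,j}/\sum_{j=0}^{n}r_n^{\,j}=1-r_n^{\,n}/\sum_{j=0}^{n}r_n^{\,j}\to1$, and similarly with $n-2$ replacing $n-1$.

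For $\nu=0$, Lemma~\ref{main1} gives $\parallel L_n^{p_n,q_n}(1;x)-1\parallel_{C_B}=|p_nq_n-1|\to0$. For $\nu=1$, Lemma~\ref{main1} together with $0\le\frac{x}{1+x}<1$ gives
\[
\Big\|L_n^{p_n,q_n}\big(\tfrac{t}{1+t};x\big)-\tfrac{x}{1+x}\Big\|_{C_B}\le\Big|p_n^2q_n\,\tfrac{[n]_{p_n,q_n}}{[n+1]_{p_n,q_n}}-1\Big|=\Big|p_nq_n\,\tfrac{\sum_{j=0}^{n-1}r_n^{\,j}}{\sum_{j=0}^{n}r_n^{\,j}}-1\Big|,
\]
which tends to $0$ by the fact above and $p_nq_n\to1$.

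For $\nu=2$, using Lemma~\ref{main1} I would write
\[
L_n^{p_n,q_n}\!\Big(\big(\tfrac{t}{1+t}\big)^2;x\Big)-\Big(\tfrac{x}{1+x}\Big)^2=A_n\!\left[\tfrac{x^2}{(1+x)(p_n+q_nx)}-\tfrac{x^2}{(1+x)^2}\right]+(A_n-1)\tfrac{x^2}{(1+x)^2}+B_n\,\tfrac{x}{1+x},
\]
where $A_n=\frac{p_n^2q_n^3[n]_{p_n,q_n}[n-1]_{p_n,q_n}}{[n+1]_{p_n,q_n}^2}=\frac{q_n^3}{p_n}\cdot\frac{\sum_{j=0}^{n-1}r_n^{\,j}\sum_{j=0}^{n-2}r_n^{\,j}}{(\sum_{j=0}^{n}r_n^{\,j})^2}$ and $B_n=\frac{p_n^{\,n+2}q_n[n]_{p_n,q_n}}{[n+1]_{p_n,q_n}^2}=\frac{p_nq_n\sum_{j=0}^{n-1}r_n^{\,j}}{(\sum_{j=0}^{n}r_n^{\,j})^2}$. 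The bracketed term equals $\frac{x^2[(1-p_n)+(1-q_n)x]}{(1+x)^2(p_n+q_nx)}$, whose modulus is $\le\frac{1-p_n}{p_n}+\frac{1-q_n}{q_n}\to0$, while $A_n$ is bounded; in the second summand $A_n\to1$ by the isolated fact and $q_n^3/p_n\to1$, with $\frac{x^2}{(1+x)^2}\le1$; and in the third $B_n\le\frac{p_nq_n}{\sum_{j=0}^{n}r_n^{\,j}}\to0$, with $\frac{x}{1+x}\le1$. Summing the three pieces gives $\big\|L_n^{p_n,q_n}((\tfrac{t}{1+t})^2;x)-(\tfrac{x}{1+x})^2\big\|_{C_B}\to0$.

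All three norms tend to $0$, hence $st$-converge to $0$, so the statistical analogue of Theorem~\ref{main} (or Theorem~\ref{main} itself followed by passage to $st$-limits) yields $st\text{-}\lim_n\parallel L_n^{p_n,q_n}(f;x)-f\parallel_{C_B}=0$ for every $f\in H_\omega$. I expect the main obstacle to be the joint asymptotics as $n\to\infty$ while $p_n,q_n\to1$ simultaneously, concentrated in the divergence $\sum_{j=0}^{n}(q_n/p_n)^{j}\to\infty$: this is precisely what forces $B_n\to0$ and drives the $(p,q)$-integer ratios to $1$; the rest is a routine ``bounded times small'' estimate, where the $x$-dependent factor $\frac{x^2}{(1+x)(p_n+q_nx)}$ is the only place one must be careful to keep the bound uniform in $x$.
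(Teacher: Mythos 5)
Your proposal is correct, and it actually proves slightly more than the paper does: you establish ordinary (uniform) convergence of the three test-function norms, from which the statistical conclusion follows a fortiori. The skeleton is the same as the paper's --- reduce to $\nu=0,1,2$ via Theorem \ref{main} and the moments of Lemma \ref{main1} --- but the execution differs in two respects. First, where the paper rewrites $\frac{[n]_{p_n,q_n}[n-1]_{p_n,q_n}}{[n+1]_{p_n,q_n}^2}$ and $\frac{[n]_{p_n,q_n}}{[n+1]_{p_n,q_n}^2}$ through algebraic identities and then simply asserts, ``using (2.1)'', that the resulting quantities $\alpha_n,\beta_n,\gamma_n$ have statistical limit $0$, you supply the analytic ingredient the paper leaves implicit: the representation $[m]_{p,q}=p^{m-1}\sum_{j=0}^{m-1}(q/p)^j$ together with the divergence $\sum_{j=0}^{n}(q_n/p_n)^j\to\infty$, which is precisely what makes $p_n^n/[n+1]_{p_n,q_n}\to 0$ and hence kills the paper's $\beta_n$ and $\gamma_n$. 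Your three-term decomposition for $\nu=2$, with the uniform-in-$x$ bound $\frac{1-p_n}{p_n}+\frac{1-q_n}{q_n}$ on the bracketed term, is a cleaner route to the same estimate. Second, you dispense entirely with the density/set-containment arguments ($U\subseteq U_1\cup U_2\cup U_3$, etc.): since \eqref{nas5} assumes ordinary limits $p_n\to1$, $q_n\to1$, ordinary convergence of the norms already implies their statistical convergence, so that machinery is redundant under the stated hypotheses; it would only earn its keep if \eqref{nas5} were weakened to statistical limits of $(p_n)$ and $(q_n)$, in which case your shortcut (and, as written, also the paper's unproved limit assertions) would need to be revisited. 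In short: same Korovkin-plus-moments strategy, but a more explicit and more elementary treatment of the $(p,q)$-integer asymptotics, at the cost of not exercising the genuinely statistical part of the framework.
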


\begin{proof}
In the light of Theorem \ref{main}, it is sufficient to prove the followings:
\begin{align}\label{nas6}
st-\lim_{n\rightarrow \infty }\parallel L_{n}^{p_{n},q_{n}}\left( \left( \frac{t%
}{1+t}\right) ^{\nu };x\right) -\left( \frac{x}{1+x}\right) ^{\nu }\parallel
_{C_{B}}=0,~~for ~~~\nu =0,1,2
\end{align}%
From Lemma \ref{main1}, the first condition of \eqref{nas6} is easily obtained for $\nu =0$. Also,
we can easily see from (\ref{main2}) of Lemma \ref{main1} that \newline
\begin{eqnarray*}
\parallel L_{n}^{p_{n},q_{n}}\left( \left( \frac{t}{1+t}\right) ^{\nu
};x\right) -\left( \frac{x}{1+x}\right) ^{\nu }\parallel _{C_{B}} &\leq &%
\bigg{|}\frac{p_{n}q_{n}[n]_{p_{n},q_{n}}}{[n+1]_{p_{n},q_{n}}}-1\bigg{|} \\
&=& 1-p_{n}q_{n}\frac{[n]_{{p_{n}},{q_{n}}}}{[n+1]_{p_{n},{q_{n}}}}.
\end{eqnarray*}%
Now for a given $\varepsilon>0$, we define the following sets
\begin{equation*}
U= \bigg\{n: \|L_n^{p_n, q_n}\left(\frac{t}{1+t};x\right)-\frac{x}{1+x}\|\geqslant \varepsilon\bigg\},
\end{equation*}

\begin{equation*}
U_1= \bigg\{n:1-p_{n}q_{n}\frac{[n]_{{p_{n}},{q_{n}}}}{[n+1]_{p_{n},{q_{n}}}}\geq\varepsilon\bigg\}.
\end{equation*}
It is obvious that $U \subset U_1$, so we have
\begin{equation*}
\delta\{ k\leq n:\|L_n^{p_n, q_n}\left(\frac{t}{1+t};x\right)-\frac{x}{1+x}\| \geq\varepsilon\}
\leqslant\delta \{k\leq n:1-p_{n}q_{n}\frac{[n]_{{p_{n}},{q_{n}}}}{[n+1]_{p_{n},{q_{n}}}}\geq\varepsilon\}.
\end{equation*}
Now using (2.1) it is clear that
\begin{equation*}
st-\lim\limits_{n}\left( 1-p_{n}q_{n}\frac{[n]_{{p_{n}},{q_{n}}}}{[n+1]_{p_{n},{q_{n}}}}\right) =0,
\end{equation*}
so
\begin{equation*}
\delta \{k\leq n:1-p_{n}q_{n}\frac{[n]_{{p_{n}},{q_{n}}}}{[n+1]_{p_{n},{q_{n}}}}\geq\varepsilon\}=0,
\end{equation*}
then
\begin{equation*}
st-\lim\limits_{n}\bigg\|L_n^{p_n, q_n}\left(\frac{t}{1+t};x\right)-\frac{x}{1+x}\bigg\|_{C_B} =0.
\end{equation*}

which proves that the condition \eqref{nas6} holds
for $\nu =1$.
To verify this condition for $\nu =2$, consider (\ref{main4}) of Lemma
\ref{main1}. Then, we see that\newline
\newline
$\bigg\| L_{n}^{p_{n},q_{n}}\left( \left( \frac{t}{1+t}\right)
^{2}; x\right) -\left( \frac{x}{1+x}\right) ^{2}\bigg\|
_{C_{B}}$ \\ \newline
$=\sup_{x\geq 0}\left\{ \frac{x^{2}}{(1+x)^{2}}\left( \frac{%
p_n^{2}q_n^3[n]_{p_{n},q_{n}}[n-1]_{p_{n},q_{n}}}{[n+1]_{p_{n},q_{n}}^{2}}.%
\frac{1+x}{p_{n}+q_{n}x}-1\right) +\frac{p_{n}^{n+2}q[n]_{p_{n},q_{n}}}{%
[n+1]_{p_{n},q_{n}}^{2}}\cdot \frac{x}{1+x}\right\} $.\newline
After some calculations, we get
\begin{equation*}
\frac{\lbrack n]_{p_{n},q_{n}}[n-1]_{p_{n},q_{n}}}{[n+1]_{p_{n},q_{n}}^{2}}=%
\frac{1}{q_n^3}\left\{ 1-p_{n}^{n}\left( 2+\frac{q_{n}}{p_{n}}\right)
\frac{1}{[n+1]_{p_{n},q_{n}}}+(p_{n}^{n})^{2}\left( 1+\frac{q_{n}}{p_{n}}%
\right) \frac{1}{[n+1]_{p_{n},q_{n}}^{2}}\right\} ,
\end{equation*}%
and
\begin{equation*}
\frac{\lbrack n]_{p_{n},q_{n}}}{[n+1]_{p_{n},q_{n}}^{2}}=\frac{1}{q_{n}}%
\left( \frac{1}{[n+1]_{p_{n},q_{n}}}-p_{n}^{n}\frac{1}{%
[n+1]_{p_{n},q_{n}}^{2}}\right) .
\end{equation*}%
Then we are led to \newline
$\bigg\| L_{n}^{p_{n},q_{n}}\left( \left( \frac{t}{1+t}\right)
^{2}; x\right) -\left( \frac{x}{1+x}\right) ^{2}\bigg\| _{C_{B}}$ \\ \newline
$\leq | (p_n^2 - 1)+ p_n^{n+2} \left( \frac{-1}{[n+1]_{p_n, q_n}} + \frac{p_n^n}{[n+1]_{p_n, q_n}^2}\right)+ p_n^{n+1} \left( \frac{-q_n}{[n+1]_{p_n, q_n}}+ \frac {p_n^2}{[n+1]_{p_n, q_n}} + \frac{p_n^n q_n}{[n+1]_{p_n, q_n}} \right)|$. \newline
$= 1-p_n^2 +p_n^{n+2} \left(\frac{1}{[n+1]_{p_n, q_n}} -\frac{p_n^n}{[n+1]_{p_n, q_n}^2}\right)+  p_n^{n+1} \left( \frac{q_n}{[n+1]_{p_n, q_n}}- \frac {p_n^2+ p_n^n q_n }{[n+1]_{p_n, q_n}^2}\right)$. \\

Now if we denote $1-p_n^2 $, $p_n^{n+2}\left( \frac{1}{[n+1]_{p_n, q_n}}- \frac{p_n^n}{[n+1]_{p_n, q_n}^2}\right)$ and $p_n^{n+1} \left( \frac{q_n}{[n+1]_{p_n, q_n}}- \frac{p_n^2 + p_n^n q_n}{ [n+1]_{p_n, q_n}^2}\right)$ by $\alpha_n$, $\beta_n$ and $\gamma_n$ respectively, then by using (2.1), we find that
\begin{equation}
st-\lim\limits_{n}\alpha_n = 0, st-\lim\limits_{n}\beta_n = 0 and st-\lim\limits_{n}\gamma_n = 0.
\end{equation}
Now for a given $\varepsilon>0$, we define the following sets
\begin{equation*}
U = \bigg\{n : \bigg\| L_{n}^{p_{n},q_{n}}\left( \left( \frac{t}{1+t}\right)
^{2}; x\right) -\left( \frac{x}{1+x}\right) ^{2}\bigg\|_{C_B}\geq \varepsilon \bigg\},
\end{equation*}
$U_1 = \bigg\{n : \alpha_n \geq \frac{\varepsilon}{3} \bigg\}$,
$U_2 = \bigg\{n : \beta_n \geq \frac{\varepsilon}{3} \bigg\}$ and
$U_3 = \bigg\{n : \gamma_n \geq \frac{\varepsilon}{3} \bigg\}$.
It is obvious that $U\subseteq U_1\cup U_2 \cup U_3$. So we have
\begin{eqnarray*}
\delta\ \bigg\{ k\leq n:\bigg\|L_n^{p_n, q_n}\left(\left(\frac{t}{1+t}\right)^2;x\right)-\left(\frac{x}{1+x}\right)^2\bigg\|_{C_B} \geq\varepsilon \bigg\}\\
\leqslant\delta\bigg\{k\leq n:\alpha_n \geq \frac{\varepsilon}{3}\bigg\} +\delta\bigg\{k\leq n:\beta_n \geq \frac{\varepsilon}{3}\bigg\} \newline
~~~~+ \delta\bigg\{k\leq n:\gamma_n \geq \frac{\varepsilon}{3}\bigg\}.
\end{eqnarray*}
Now by virtue of (2.3), the right hand side of the above inequality is trivial, so we get
\begin{equation*}
st-\lim\limits_{n}\bigg\|L_n^{p_n, q_n}\left(\left(\frac{t}{1+t}\right)^2;x\right)-\left(\frac{x}{1+x}\right)^2\bigg\|_{C_B}= 0.
\end{equation*}
Hence the proof of the theorem is complete.
\end{proof}

\section{Rate of Convergence}

In this section, we calculate the rate of convergence of the operators \eqref{nas3} by
means of modulus of continuity and Lipschitz type maximal functions.

\parindent=8mmThe modulus of continuity for $f\in H_{\omega }$ is defined by
\begin{equation*}
\widetilde{\omega }(f;\delta )=\sum_{\substack{ \mid \frac{t}{1+t}-\frac{x}{%
1+x}\mid \leq \delta ,  \\ x,t\geq 0}}\mid f(t)-f(x)\mid
\end{equation*}%
where $\widetilde{\omega }(f;\delta )$ satisfies the following conditions.
For all $f\in H_{\omega }(\mathbb{R}_{+})$

\begin{enumerate}
\item $\lim_{\delta \to 0}\widetilde{\omega}(f; \delta)=0$

\item $\mid f(t)-f(x) \mid \leq \widetilde{\omega}(f; \delta)
\left( \frac{\mid \frac{t}{1+t}-\frac{x}{1+x}\mid}{\delta}+1 \right)$
\end{enumerate}

\begin{theorem}\label{main9}
Let $p=p_{n} and q=q_{n}$ satisfy \eqref{nas5}, for $0<q_{n}<p_{n}\leq 1$, and let $%
L_{n}^{p_{n},q_{n}}$ be defined by \eqref{nas3}. Then for each $x\geq 0$ and for
any function $f\in H_{\omega }$, we have
\begin{equation*}
\mid L_{n}^{p_{n},q_{n}}(f;x)-f\mid \leq 2\widetilde{\omega }(f;\sqrt{\delta
_{n}(x)}),
\end{equation*}%
where
\begin{equation*}
\delta _{n}(x)=\frac{x^{2}}{(1+x)^{2}}\left( \frac{%
p_{n}^{2}q_{n}^{3}[n]_{p_{n},q_{n}}[n-1]_{p_{n},q_{n}}}{[n+1]_{p_{n},q_{n}}^{2}}%
\frac{1+x}{p_{n}+q_{n}x}-2\frac{p_{n} q_{n}[n]_{p_{n},q_{n}}}{[n+1]_{p_{n},q_{n}}}%
+1\right) +\frac{p_{n}^{n+2} q_n[n]_{p_{n},q_{n}}}{[n+1]_{p_{n},q_{n}}^{2}}\frac{%
x}{1+x}.
\end{equation*}
\end{theorem}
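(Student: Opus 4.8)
The plan is to run the classical Shisha–Mond/Korovkin quantitative argument, but carried out in the ``ratio'' metric $\bigl|\frac{t}{1+t}-\frac{x}{1+x}\bigr|$ that is natural for $H_\omega$ and for $\widetilde\omega$. Fix $x\ge 0$. By the linearity and positivity of $L_{n}^{p_n,q_n}$ and item (1) of Lemma \ref{main1} (where $L_n^{p_n,q_n}(1;x)=p_nq_n$, a quantity that is harmless since $p_nq_n\to 1$), I would first reduce to
\[
\bigl|L_{n}^{p_n,q_n}(f;x)-f(x)\bigr|\le L_{n}^{p_n,q_n}\bigl(|f(t)-f(x)|;x\bigr)+(1-p_nq_n)\,|f(x)|,
\]
and then insert property (2) of $\widetilde\omega$, namely $|f(t)-f(x)|\le\widetilde\omega(f;\delta)\bigl(\delta^{-1}\bigl|\frac{t}{1+t}-\frac{x}{1+x}\bigr|+1\bigr)$. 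Applying $L_{n}^{p_n,q_n}$ term by term yields
\[
L_{n}^{p_n,q_n}\bigl(|f(t)-f(x)|;x\bigr)\le\widetilde\omega(f;\delta)\left(L_{n}^{p_n,q_n}(1;x)+\frac1\delta\,L_{n}^{p_n,q_n}\!\left(\Bigl|\tfrac{t}{1+t}-\tfrac{x}{1+x}\Bigr|;x\right)\right).
\]

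\noindent The next step is to bound the remaining first absolute moment by the Cauchy–Schwarz inequality for positive linear functionals,
\[
L_{n}^{p_n,q_n}\!\left(\Bigl|\tfrac{t}{1+t}-\tfrac{x}{1+x}\Bigr|;x\right)\le\sqrt{L_{n}^{p_n,q_n}(1;x)}\ \sqrt{L_{n}^{p_n,q_n}\!\left(\Bigl(\tfrac{t}{1+t}-\tfrac{x}{1+x}\Bigr)^2;x\right)},
\]
so that everything is controlled by the second ``central'' moment $\mu_n(x):=L_{n}^{p_n,q_n}\bigl((\tfrac{t}{1+t}-\tfrac{x}{1+x})^2;x\bigr)$. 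Expanding the square and using linearity gives
\[
\mu_n(x)=L_{n}^{p_n,q_n}\!\left(\Bigl(\tfrac{t}{1+t}\Bigr)^2;x\right)-2\,\tfrac{x}{1+x}\,L_{n}^{p_n,q_n}\!\left(\tfrac{t}{1+t};x\right)+\Bigl(\tfrac{x}{1+x}\Bigr)^2 L_{n}^{p_n,q_n}(1;x),
\]
and substituting the three explicit formulas from items (1)–(3) of Lemma \ref{main1}, collecting everything over the common denominator $[n+1]_{p_n,q_n}^2$, and simplifying the $x$-dependent fractions $\frac{x^2}{(1+x)(p_n+q_nx)}$ and $\frac{x}{1+x}$ should collapse $\mu_n(x)$ to exactly the quantity $\delta_n(x)$ displayed in the statement.

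\noindent With $\mu_n(x)=\delta_n(x)$ in hand, the above chain gives $\bigl|L_{n}^{p_n,q_n}(f;x)-f(x)\bigr|\le\widetilde\omega(f;\delta)\bigl(1+\delta^{-1}\sqrt{\delta_n(x)}\bigr)$ up to the lower-order term coming from $p_nq_n\ne 1$; choosing $\delta=\sqrt{\delta_n(x)}$ then produces the claimed factor $2\widetilde\omega(f;\sqrt{\delta_n(x)})$. The only genuinely laborious step — and the one I expect to be the main obstacle — is the algebraic identification of $\mu_n(x)$ with $\delta_n(x)$: one must bookkeep the $p_n$-powers and the $[n]_{p_n,q_n},[n-1]_{p_n,q_n},[n+1]_{p_n,q_n}$ factors appearing in items (2)–(3) of Lemma \ref{main1} and keep track of the normalization $L_{n}^{p_n,q_n}(1;x)=p_nq_n$, which is where the constant term inside $\delta_n(x)$ enters; the remaining analytic steps (the two monotonicity/linearity inequalities, Cauchy–Schwarz, and the choice of $\delta$) are routine.
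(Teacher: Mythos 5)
Your proposal follows essentially the same route as the paper's proof: bound $|L_{n}^{p_n,q_n}(f;x)-f(x)|$ by $L_{n}^{p_n,q_n}(|f(t)-f(x)|;x)$, insert property (2) of $\widetilde{\omega}$, apply Cauchy--Schwarz to the first absolute moment, evaluate the second moment from the three formulas of Lemma \ref{main1}, and choose $\delta=\sqrt{\delta_n(x)}$ to obtain the factor $2$. If anything you are more careful than the paper, which silently drops the residual term $(1-p_nq_n)|f(x)|$ coming from $L_{n}^{p_n,q_n}(1;x)=p_nq_n\neq 1$ and does not display the algebra identifying the expanded second moment with the stated $\delta_n(x)$.
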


\begin{proof}
\begin{eqnarray*}
\mid L_n^{p_n,q_n}(f;x)-f \mid &\leq & L_n^{p_n,q_n}\left(\mid f(t)-f(x)
\mid;x \right) \\
&\leq & \widetilde{\omega}(f; \delta) \left\{1+\frac{1}{\delta}
L_n^{p_n,q_n} \left( \bigg{|} \frac{t}{1+t}-\frac{x}{1+x}\big{|};x
\right)\right\}.
\end{eqnarray*}
Now by using the Cauchy-Schwarz inequality, we have
\begin{eqnarray*}
\mid L_n^{p_n,q_n}(f;x)-f \mid &\leq & \widetilde{\omega}(f; \delta_n)
\left\{1+\frac{1}{\delta_n} \left[ \left( L_n^{p_n,q_n} \left(\frac{t}{1+t}-%
\frac{x}{1+x}\right)^2;x \right) \right]^{\frac{1}{2}} \left(
L_n^{p_n,q_n}(1;x)\right)^{\frac{1}{2}}\right\}
\end{eqnarray*}
$\leq \widetilde{\omega}(f; \delta_n) \left\{ 1+ \frac{1}{\delta_n} \left[
\frac{x^2}{(1+x)^2}\left( \frac{p_n^2q_n^3[n]_{p_n,q_n}[n-1]_{p_n,q_n}}{%
[n+1]_{p_n,q_n}^2} \frac{1+x}{p_n+q_nx} -2\frac{p_nq_n[n]_{p_n,q_n}}{%
[n+1]_{p_n,q_n}}+1\right)+\frac{p_n^{n+2}q_n[n]_{p_n,q_n}}{[n+1]_{p_n,q_n}^2}%
\frac{x}{1+x}\right]^{\frac{1}{2}}\right\}$.\newline

\parindent=8mmThis completes the proof.
\end{proof}
\parindent=8mmNow we will give an estimate concerning the rate of
convergence by means of Lipschitz type maximal functions. In \cite{aral1},
the Lipschitz type maximal function space on $E\subset \mathbb{R}_{+}$ is
defined as
\begin{align}\label{nas7}
\widetilde{W}_{\alpha ,E}=\{f:\sup (1+x)^{\alpha }\widetilde{f}_{\alpha
}(x)\leq M\frac{1}{(1+y)^{\alpha }}:x\leq 0,~\mbox{and}~y\in
E\}
\end{align}%
where $f$ is bounded and continuous function on $\mathbb{R}_{+}$, $M$ is a
positive constant and $0<\alpha \leq 1$.

In \cite{lenz}, B. Lenze introduced a Lipschitz type maximal function $%
f_{\alpha }$ as follows:
\begin{align}\label{nas8}
f_{\alpha }(x,t)=\sum_{\substack{ t>0  \\ t\neq x}}\frac{\mid f(t)-f(x)\mid
}{\mid x-t\mid ^{\alpha }}.
\end{align}%
We denote by $d(x,E)$, the distance between $x$ and $E$, that is
\begin{equation*}
d(x,E)=\inf \{\mid x-y\mid ;y\in E\}.
\end{equation*}

\begin{theorem}\label{main8}
For all $f \in \widetilde{W}_{\alpha, E},$ we have
\begin{align}\label{nas9}
\mid L_n^{p_n,q_n}(f;x) -f(x) \mid \leq M\left( \delta_n^{\frac{\alpha}{2}%
}(x)+2 \left( d(x,E)\right)^\alpha\right)
\end{align}
where $\delta_n(x)$ is defined as in Theorem \ref{main9}.
\end{theorem}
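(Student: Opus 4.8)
The plan is to reduce \eqref{nas9} to the central second-moment computation already carried out for Theorem \ref{main9}, after transporting the Lipschitz-type hypothesis to the natural ``BBH distance'' $\left|\frac{x}{1+x}-\frac{t}{1+t}\right|=\frac{|x-t|}{(1+x)(1+t)}$. The decisive fact is that $L_{n}^{p_{n},q_{n}}\!\left(\left(\frac{t}{1+t}-\frac{x}{1+x}\right)^{2};x\right)$ is, by Lemma \ref{main1}, exactly the quantity $\delta_{n}(x)$ of Theorem \ref{main9}: expanding this central second moment by linearity and the three identities of Lemma \ref{main1} produces precisely that expression. Hence, once $|L_{n}^{p_{n},q_{n}}(f;x)-f(x)|$ is bounded by $M$ times a first absolute $\alpha$-moment plus a boundary term coming from $d(x,E)$, a single application of Hölder's inequality will close the argument.

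I would first fix $x\ge 0$. Since $d(x,E)=d(x,\overline{E})$ and $\overline{E}$ is a non-empty closed subset of $\mathbb{R}_{+}$, the distance is attained at some $x_{0}\in\overline{E}$ with $|x-x_{0}|=d(x,E)$; by continuity of $f$ the membership $f\in\widetilde{W}_{\alpha,E}$ yields, with the $E$-variable taken to be $x_{0}$, the two bounds $|f(t)-f(x_{0})|\le M\left|\frac{t}{1+t}-\frac{x_{0}}{1+x_{0}}\right|^{\alpha}$ and $|f(x)-f(x_{0})|\le M\left|\frac{x}{1+x}-\frac{x_{0}}{1+x_{0}}\right|^{\alpha}$, valid for all $t\ge 0$. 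Combining these through the triangle inequality, then using the subadditivity $(a+b)^{\alpha}\le a^{\alpha}+b^{\alpha}$ (for $a,b\ge 0$, $0<\alpha\le 1$) together with the triangle inequality for the BBH distance, and finally the elementary estimate $\left|\frac{x}{1+x}-\frac{x_{0}}{1+x_{0}}\right|=\frac{|x-x_{0}|}{(1+x)(1+x_{0})}\le|x-x_{0}|=d(x,E)$, I obtain the pointwise bound
\[
|f(t)-f(x)|\le M\left|\frac{t}{1+t}-\frac{x}{1+x}\right|^{\alpha}+2M\,\bigl(d(x,E)\bigr)^{\alpha},\qquad t\ge 0.
\]

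Next I would apply the positive linear operator $L_{n}^{p_{n},q_{n}}(\,\cdot\,;x)$ to this inequality, proceeding exactly as in the proof of Theorem \ref{main9} and using $L_{n}^{p_{n},q_{n}}(1;x)=p_{n}q_{n}\le 1$, which gives $|L_{n}^{p_{n},q_{n}}(f;x)-f(x)|\le M\,L_{n}^{p_{n},q_{n}}\!\left(\left|\frac{t}{1+t}-\frac{x}{1+x}\right|^{\alpha};x\right)+2M\,(d(x,E))^{\alpha}$. To the surviving moment I apply Hölder's inequality with the conjugate exponents $\frac{2}{\alpha}$ and $\frac{2}{2-\alpha}$:
\[
L_{n}^{p_{n},q_{n}}\!\left(\left|\frac{t}{1+t}-\frac{x}{1+x}\right|^{\alpha};x\right)\le\left[L_{n}^{p_{n},q_{n}}\!\left(\left(\frac{t}{1+t}-\frac{x}{1+x}\right)^{2};x\right)\right]^{\frac{\alpha}{2}}\bigl[L_{n}^{p_{n},q_{n}}(1;x)\bigr]^{\frac{2-\alpha}{2}}\le\bigl[\delta_{n}(x)\bigr]^{\frac{\alpha}{2}},
\]
where the last inequality uses $L_{n}^{p_{n},q_{n}}(1;x)\le 1$ and the identification of the central second moment with $\delta_{n}(x)$ noted above. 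Substituting yields $|L_{n}^{p_{n},q_{n}}(f;x)-f(x)|\le M\bigl(\delta_{n}^{\alpha/2}(x)+2(d(x,E))^{\alpha}\bigr)$, which is \eqref{nas9}.

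The only delicate point is the passage through the nearest point $x_{0}\in\overline{E}$: one must invoke the hypothesis $f\in\widetilde{W}_{\alpha,E}$ with its $E$-argument set equal to $x_{0}$ (using continuity of $f$ to pass from $E$ to $\overline{E}$), and then split $\left|\frac{t}{1+t}-\frac{x_{0}}{1+x_{0}}\right|^{\alpha}$ by subadditivity so that the error separates cleanly into a genuine $L_{n}^{p_{n},q_{n}}$-moment, which Hölder converts into $\delta_{n}^{\alpha/2}(x)$, and a term of order $(d(x,E))^{\alpha}$ that the operator leaves essentially untouched. Everything else is the routine Cauchy--Schwarz/Hölder estimate already used for Theorem \ref{main9}, together with the moment identities of Lemma \ref{main1}.
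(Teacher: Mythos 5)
Your proposal is correct and follows essentially the same route as the paper: choose the nearest point $x_{0}\in\overline{E}$ with $|x-x_{0}|=d(x,E)$, split $|f(t)-f(x)|$ through $f(x_{0})$ using the Lipschitz-type hypothesis, apply subadditivity of $s\mapsto s^{\alpha}$ to transfer the moment from $x_{0}$ to $x$, and finish with H\"older's inequality with exponents $\frac{2}{\alpha}$ and $\frac{2}{2-\alpha}$ to produce $\delta_{n}^{\alpha/2}(x)$. The only difference is cosmetic (you establish the pointwise bound before applying the operator, while the paper applies the operator first and then splits), so no further comparison is needed.
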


\begin{proof}
Let $\overline{E}$ denote the closure of the set $E$. Then there exits a $%
x_{0}\in \overline{E}$ such that $\mid x-x_{0}\mid =d(x,E)$, where $x\in
\mathbb{R}_{+}$. Thus we can write
\begin{equation*}
\mid f-f(x)\mid \leq \mid f-f(x_{0})\mid +\mid f(x_{0})-f(x)\mid .
\end{equation*}%
Since $L_{n}^{p_{n},q_{n}}$ are positive linear operators, so for $f\in \widetilde{W%
}_{\alpha ,E}$, by using the previous inequality, we have
\begin{equation*}
\mid L_{n}^{p_{n},q_{n}}(f;x)-f(x)\mid \leq \mid L_{n}^{p_{n},q_{n}}(\mid
f-f(x_{0})\mid ;x)+\mid f(x_{0})-f(x)\mid L_{n}^{p_{n},q_{n}}(1;x)
\end{equation*}%
\begin{equation*}
\leq M\left( L_{n}^{p_{n},q_{n}}\left( \bigg{|}\frac{t}{1+t}-\frac{x_{0}}{%
1+x_{0}}\bigg{|}^{\alpha };x\right) +\frac{\mid x-x_{0}\mid ^{\alpha }}{%
(1+x)^{\alpha }(1+x_{0})^{\alpha }}L_{n}^{p_{n},q_{n}}(1;x)\right) .
\end{equation*}%
Now $(a+b)^{\alpha }\leq a^{\alpha }+b^{\alpha }$ consequently
implies that
\begin{equation*}
L_{n}^{p_{n},q_{n}}\left( \bigg{|}\frac{t}{1+t}-\frac{x_{0}}{1+x_{0}}\bigg{|}%
^{\alpha };x\right) \leq L_{n}^{p_{n},q_{n}}\left( \bigg{|}\frac{t}{1+t}-%
\frac{x}{1+x}\bigg{|}^{\alpha };x\right) +L_{n}^{p_{n},q_{n}}\left( \bigg{|}%
\frac{x}{1+x}-\frac{x_{0}}{1+x_{0}}\bigg{|}^{\alpha };x\right)
\end{equation*}

\begin{equation*}
L_{n}^{p_{n},q_{n}}\left( \bigg{|}\frac{t}{1+t}-\frac{x_{0}}{1+x_{0}}\bigg{|}%
^{\alpha };x\right) \leq L_{n}^{p_{n},q_{n}}\left( \bigg{|}\frac{t}{1+t}-%
\frac{x}{1+x}\bigg{|}^{\alpha };x\right) +\frac{\mid x-x_{0}\mid ^{\alpha }}{%
(1+x)^{\alpha }(1+x_{0})^{\alpha }}L_{n}^{p_{n},q_{n}}(1;x).
\end{equation*}%
By using the H\"{o}lder's inequality with $p=\frac{2}{\alpha }$ and $q=\frac{2}{%
2-\alpha }$, we have\\ \newline
$L_{n}^{p_{n},q_{n}}\left( \bigg{|}\frac{t}{1+t}-\frac{x_{0}}{1+x_{0}}%
\bigg{|}^{\alpha };x\right) \leq L_{n}^{p_{n},q_{n}}\left( \left( \frac{t}{%
1+t}-\frac{x}{1+x}\right) ^{2};x\right) ^{\frac{\alpha }{2}%
}(L_{n}^{p_{n},q_{n}}(1;x))^{\frac{2-\alpha }{2}}$
\begin{equation*}
+\frac{\mid x-x_{0}\mid
^{\alpha }}{(1+x)^{\alpha }(1+x_{0})^{\alpha }}L_{n}^{p_{n},q_{n}}(1;x)
\end{equation*}

\begin{equation*}
=\delta _{n}^{\frac{\alpha }{2}}(x)+\frac{\mid x-x_{0}\mid ^{\alpha }}{%
(1+x)^{\alpha }(1+x_{0})^{\alpha }}.
\end{equation*}

\parindent=8mmThis completes the proof.
\end{proof}
\begin{corollary}\label{main12}
If we take $E= \mathbb{R}_+$ as a particular case of Theorem \ref{main8}, then for
all $f \in \widetilde{W}_{\alpha, \mathbb{R}_+}$, we have
\begin{equation*}
\mid L_n^{p_n,q_n}(f;x) -f(x) \mid \leq M \delta_n^{\frac{\alpha}{2}}(x),
\end{equation*}
\end{corollary}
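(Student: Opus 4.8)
The plan is to specialize Theorem~\ref{main8} to the choice $E=\mathbb{R}_+$ and observe that the distance term in \eqref{nas9} simply vanishes. First I would recall that $\overline{E}=\overline{\mathbb{R}_+}=\mathbb{R}_+$, and that for an arbitrary point $x\in\mathbb{R}_+$ the distance
\[
d(x,\mathbb{R}_+)=\inf\{|x-y|:y\in\mathbb{R}_+\}
\]
is attained at $y=x$, so that $d(x,\mathbb{R}_+)=0$. In the notation of the proof of Theorem~\ref{main8}, this corresponds to being able to take $x_0=x$, whence $|x-x_0|=0$ and the auxiliary term $\frac{|x-x_0|^\alpha}{(1+x)^\alpha(1+x_0)^\alpha}L_n^{p_n,q_n}(1;x)$ drops out entirely.

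Next I would substitute $d(x,E)=0$ into the estimate \eqref{nas9}. Since $0<\alpha\le 1$, we have $\big(d(x,E)\big)^\alpha=0$, and therefore the bound
\[
|L_n^{p_n,q_n}(f;x)-f(x)|\leq M\left(\delta_n^{\frac{\alpha}{2}}(x)+2\left(d(x,E)\right)^\alpha\right)
\]
reduces at once to
\[
|L_n^{p_n,q_n}(f;x)-f(x)|\leq M\,\delta_n^{\frac{\alpha}{2}}(x),
\]
with $\delta_n(x)$ exactly as defined in Theorem~\ref{main9}. This is the asserted inequality, valid for every $x\ge 0$ and every $f\in\widetilde{W}_{\alpha,\mathbb{R}_+}$.

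I do not expect any genuine obstacle here: the corollary is an immediate specialization, and the only point requiring verification is the elementary fact that the distance from a point of $\mathbb{R}_+$ to $\mathbb{R}_+$ is zero. As an alternative I could simply re-run the argument of Theorem~\ref{main8} with $x_0=x$ chosen from the outset, in which case the cross term never appears and one lands directly on $\delta_n^{\frac{\alpha}{2}}(x)$; either route is routine.
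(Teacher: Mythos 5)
Your proof is correct and matches the paper's intent exactly: the paper states this corollary without an explicit proof precisely because it follows from Theorem \ref{main8} by noting that $d(x,\mathbb{R}_+)=0$ (equivalently $x_0=x$), which kills the term $2\left(d(x,E)\right)^{\alpha}$ in \eqref{nas9}. This is the same observation the paper makes later for the bivariate case, so nothing further is needed.
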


where $\delta_n(x)$ is defined as in Theorem \ref{main9}.
\begin{theorem}
If $x\in (0,\infty )\backslash \left\{ p^{n-k+1}\frac{[k]_{p,q}}{%
[n-k+1]_{p,q}q^{k}}\bigg{|}k=0,1,2,\cdots ,n\right\} $, then

$L_n^{p,q}(f;x)-f\left(\frac{px}{q}\right)=-\frac{x^{n+1}}{\ell_n^{p,q}(x)} %
\left[\frac{px}{q};\frac{p[n]_{p,q}}{q^n};f\right] pq^{\frac{n(n-1)}{2}-n}$
\begin{align}\label{nas10}
+\frac{x}{\ell_n^{p,q}(x)}\sum_{k=0}^{n-1} \left[\frac{px}{q};p^{n-k+1}\frac{%
[k]_{p,q}}{[n-k+1]_{p,q}q^{k} };f\right] \frac{1}{[n-k]_{p,q}} p^{\frac{%
(n-k)(n-k-1)}{2}-(k-n)-1}q^{\frac{k(k-1)}{2}-k} \left[
\begin{array}{c}
n \\
k%
\end{array}%
\right] _{p,q} x^{k}.
\end{align}
\end{theorem}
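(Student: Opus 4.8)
The plan is the standard one for this kind of representation formula: expand $f$ in divided-difference form about $px/q$ and carry the expansion through $L_n^{p,q}$ term by term, with the $(p,q)$-Euler identity \eqref{nas4} used to absorb the constant. Write the $k$-th node and weight of $L_n^{p,q}$ as
\[
x_{n,k}:=\frac{p^{\,n-k+1}[k]_{p,q}}{[n-k+1]_{p,q}\,q^{k}},\qquad
c_{n,k}:=p^{\frac{(n-k)(n-k-1)}{2}}q^{\frac{k(k-1)}{2}}\left[\begin{array}{c}n\\k\end{array}\right]_{p,q},
\]
so that $L_n^{p,q}(f;x)=\dfrac{pq}{\ell_n^{p,q}(x)}\sum_{k=0}^{n}f(x_{n,k})\,c_{n,k}x^{k}$ and \eqref{nas4} reads $\sum_{k=0}^{n}c_{n,k}x^{k}=\ell_n^{p,q}(x)$. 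Since $[1]_{p,q}=1$, the last node $x_{n,n}=p[n]_{p,q}/q^{n}$ is exactly the one appearing in the first term of \eqref{nas10}, and the restriction placed on $x$ is there to keep every first divided difference $[\,px/q\,;x_{n,k}\,;f\,]=\dfrac{f(px/q)-f(x_{n,k})}{px/q-x_{n,k}}$, $k=0,\dots,n$, well defined.

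The first two steps are formal. Using \eqref{nas4} to bring the constant $f(px/q)$ inside the sum gives
\[
L_n^{p,q}(f;x)-f\!\Bigl(\tfrac{px}{q}\Bigr)=\frac{pq}{\ell_n^{p,q}(x)}\sum_{k=0}^{n}\Bigl(f(x_{n,k})-f\!\bigl(\tfrac{px}{q}\bigr)\Bigr)c_{n,k}x^{k},
\]
and replacing each difference by $f(x_{n,k})-f(px/q)=-\bigl(\tfrac{px}{q}-x_{n,k}\bigr)[\,px/q;x_{n,k};f\,]$ yields
\[
L_n^{p,q}(f;x)-f\!\Bigl(\tfrac{px}{q}\Bigr)=-\frac{pq}{\ell_n^{p,q}(x)}\sum_{k=0}^{n}\Bigl(\tfrac{px}{q}-x_{n,k}\Bigr)c_{n,k}x^{k}\,[\,px/q;x_{n,k};f\,].
\]
Detaching the $k=n$ summand (node $x_{n,n}=p[n]_{p,q}/q^{n}$, contributing the $x^{n+1}$ term of \eqref{nas10}) and keeping $k=0,\dots,n-1$ in the remaining sum, the expression already has the structure of \eqref{nas10}; what is left is to bring the scalar coefficient $-\dfrac{pq}{\ell_n^{p,q}(x)}\bigl(\tfrac{px}{q}-x_{n,k}\bigr)c_{n,k}x^{k}$ into the stated form.

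That final step is the only real work, and the place I expect the difficulty. I would compute $\tfrac{px}{q}-x_{n,k}$ from the definition of $x_{n,k}$ and simplify with the two identities already recorded in the Preliminaries, $[k]_{p,q}=p^{k-1}+q[k-1]_{p,q}$ and $q^{k}[n-k+1]_{p,q}=[n+1]_{p,q}-p^{\,n-k+1}[k]_{p,q}$. The $(p,q)$-integer $[n-k+1]_{p,q}$ in the denominator of $x_{n,k}$, combined with $[n-k+1]_{p,q}!=[n-k+1]_{p,q}\,[n-k]_{p,q}!$ inside $\left[\begin{array}{c}n\\k\end{array}\right]_{p,q}$, is what should produce the factor $1/[n-k]_{p,q}$; and gathering the powers of $p$ and $q$ contributed by $c_{n,k}$, by $\tfrac{px}{q}-x_{n,k}$, and by those substitutions should reproduce the exponents $\tfrac{(n-k)(n-k-1)}{2}-(k-n)-1$ of $p$ and $\tfrac{k(k-1)}{2}-k$ of $q$ in \eqref{nas10}, with $\left[\begin{array}{c}n\\k\end{array}\right]_{p,q}$ carried along unchanged. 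The delicate point is precisely this bookkeeping — keeping the quadratic exponents of $c_{n,k}$ straight while substituting $q^{k}[n-k+1]_{p,q}=[n+1]_{p,q}-p^{\,n-k+1}[k]_{p,q}$, and confirming that the $(p,q)$-factorials collapse to exactly $1/[n-k]_{p,q}$ with no stray binomial or extra $(p,q)$-integer left over. Once the coefficient of each $[\,px/q;x_{n,k};f\,]$ has been checked against \eqref{nas10}, the identity follows.
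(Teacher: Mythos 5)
Your first two steps coincide with the paper's: bring $f(px/q)$ inside the sum via the Euler identity \eqref{nas4} and rewrite each difference $f(x_{n,k})-f(px/q)$ as $-\bigl(\tfrac{px}{q}-x_{n,k}\bigr)\bigl[\tfrac{px}{q};x_{n,k};f\bigr]$. The gap is in your third step, where you claim that after detaching $k=n$ the expression ``already has the structure of \eqref{nas10}'' and that only scalar bookkeeping of the coefficient $-\tfrac{pq}{\ell_n^{p,q}(x)}\bigl(\tfrac{px}{q}-x_{n,k}\bigr)c_{n,k}x^{k}$ remains. That coefficient is not a constant multiple of $x^{k+1}$: it equals $-p^{2}c_{n,k}x^{k+1}+pq\,x_{n,k}c_{n,k}x^{k}$, a two-term polynomial whose $x^{k}$ part is nonzero for every $k\geq 1$, while the corresponding coefficient in \eqref{nas10} is the single monomial $\tfrac{x}{\ell_n^{p,q}(x)}\cdot\tfrac{1}{[n-k]_{p,q}}p^{\frac{(n-k)(n-k-1)}{2}-(k-n)-1}q^{\frac{k(k-1)}{2}-k}\left[\begin{array}{c}n\\k\end{array}\right]_{p,q}x^{k}$. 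A term-by-term identification of a binomial in $x$ with a monomial is algebraically impossible, so no choice of the $(p,q)$-identities you list will close this step; the same objection already applies to your detached $k=n$ term, which is not purely an $x^{n+1}$ term.

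What the paper actually does — and what your plan omits — is an Abel-summation regrouping. It splits $\bigl(\tfrac{px}{q}-x_{n,k}\bigr)c_{n,k}x^{k}$ into the two sums just described, applies $\tfrac{[k]_{p,q}}{[n-k+1]_{p,q}}\left[\begin{array}{c}n\\k\end{array}\right]_{p,q}=\left[\begin{array}{c}n\\k-1\end{array}\right]_{p,q}$ to the $x_{n,k}$-part, shifts that sum's index $k\mapsto k+1$, and recombines. After this shift the coefficient of $x^{k+1}$ multiplies the \emph{difference} $\bigl[\tfrac{px}{q};x_{n,k+1};f\bigr]-\bigl[\tfrac{px}{q};x_{n,k};f\bigr]$ of divided differences at consecutive nodes, which is then converted using $\bigl[\tfrac{px}{q};x_{n,k+1};f\bigr]-\bigl[\tfrac{px}{q};x_{n,k};f\bigr]=(x_{n,k+1}-x_{n,k})\bigl[\tfrac{px}{q};x_{n,k};x_{n,k+1};f\bigr]$ together with an explicit evaluation of the node gap $x_{n,k+1}-x_{n,k}$; the isolated $k=n$ term of the unshifted sum supplies the $x^{n+1}$ term. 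This telescoping step is the substance of the proof, and it also reveals that the natural output involves second divided differences rather than first ones, so any attempt to match coefficients against first divided differences alone, as you propose, cannot succeed.
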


\begin{proof}
By using \eqref{nas3}, we have\\ \newline
$L_n^{p,q}(f;x)-f\left(\frac{px}{q}\right)=\frac{pq}{\ell_n^{p,q}(x)}%
\sum_{k=0}^n \bigg\{f \left( \frac{ p^{n-k+1}[k]_{p,q}}{[n-k+1]_{p,q}q^k }%
\right) -f\left(\frac{px}{q}\right)\bigg\} p^{\frac{(n-k)(n-k-1)}{2}}q^{%
\frac{k(k-1)}{2}}
\left[
\begin{array}{c}
n \\
k%
\end{array}%
\right] _{p,q} x^k $\\
$=-\frac{1}{\ell_n^{p,q}(x)}\sum_{k=0}^n \left(\frac{px}{q}- \frac{
p^{n-k+1}[k]_{p,q}}{[n-k+1]_{p,q}q^k }\right) \left[\frac{px}{q}; \frac{%
p^{n-k+1}[k]_{p,q}}{[n-k+1]_{p,q}q^k };f\right] p^{\frac{(n-k)(n-k-1)}{2}+1}q^{%
\frac{k(k-1)}{2}+1} \left[
\begin{array}{c}
n \\
k%
\end{array}%
\right] _{p,q} x^k$\\ \newline
By using $\frac{[k]_{p,q}}{[n-k+1]_{p,q}}\left[
\begin{array}{c}
n \\
k%
\end{array}%
\right] _{p,q}=\left[
\begin{array}{c}
n \\
k-1%
\end{array}%
\right] _{p,q}$, we have

\begin{equation*}
L_n^{p,q}(f;x)-f\left(\frac{px}{q}\right)=-\frac{x}{\ell_n^{p,q}(x)}%
\sum_{k=0}^n \left[\frac{px}{q};\frac{ p^{n-k+1}[k]_{p,q}}{[n-k+1]_{p,q}q^k }%
;f\right] p^{\frac{(n-k)(n-k-1)}{2}+2}q^{\frac{k(k-1)}{2}} \left[
\begin{array}{c}
n \\
k%
\end{array}%
\right] _{p,q} x^k.
\end{equation*}

\begin{equation*}
+\frac{1}{\ell_n^{p,q}(x)}\sum_{k=1}^n \left[\frac{px}{q};\frac{ p^{n-k+1}
[k]_{p,q}}{[n-k+1]_{p,q}q^k };f\right] p^{\frac{(n-k)(n-k-1)}{2}-(k-n-1)-1}q^{%
\frac{k(k-1)}{2}-(k-1)} \left[
\begin{array}{c}
n \\
k-1%
\end{array}%
\right] _{p,q} x^k
\end{equation*}

\begin{equation*}
=-\frac{x}{\ell_n^{p,q}(x)}\sum_{k=0}^n \left[\frac{px}{q};\frac{ p^{n-k+1}
[k]_{p,q}}{[n-k+1]_{p,q}q^k };f\right]  p^{\frac{(n-k)(n-k-1)}{2}+2}q^{\frac{k(k-1)}{2}} \left[
\begin{array}{c}
n \\
k%
\end{array}%
\right] _{p,q} x^k.
\end{equation*}

\begin{equation*}
+\frac{x}{\ell_n^{p,q}(x)}\sum_{k=0}^{n-1} \left[\frac{px}{q};\frac{
p^{n-k}[k+1]_{p,q}}{[n-k]_{p,q}q^{k+1} };f\right] p^{\frac{(n-k)(n-k-1)}{2}-(k-n-1)-2}%
q^{\frac{k(k-1)}{2}-k} \left[
\begin{array}{c}
n \\
k%
\end{array}%
\right] _{p,q} x^{k}
\end{equation*}

$=-\frac{x^{n+1}}{\ell_n^{p,q}(x)} \left[\frac{px}{q};\frac{p[n]_{p,q}}{q^n}%
;f\right] pq^{\frac{n(n-1)}{2}-n} $\\ \newline
$+\frac{x}{\ell_n^{p,q}(x)}\sum_{k=0}^{n-1}\left \{ \left[\frac{px}{q};\frac{
p^{n-k}[k+1]_{p,q}}{[n-k]_{p,q}q^{k+1} };f\right] -\left[\frac{px}{q};\frac{
p^{n-k+1} [k]_{p,q}}{[n-k+1]_{p,q}q^{k} };f\right] \right \} p^{\frac{%
(n-k)(n-k-1)}{2}-(k-n-1)-2}q^{\frac{k(k-1)}{2}-k} \left[
\begin{array}{c}
n \\
k%
\end{array}%
\right] _{p,q} x^{k}.$\\ \newline

Now by using the results\\ \newline
$\left[\frac{px}{q};\frac{ p^{n-k}[k+1]_{p,q}}{[n-k]_{p,q}q^{k+1} };f\right]
-\left[\frac{px}{q};\frac{ p^{n-k+1} [k]_{p,q}}{[n-k+1]_{p,q}q^{k} };f\right]
$
\begin{equation*}
= \left( \frac{ p^{n-k}[k+1]_{p,q}}{[n-k]_{p,q}q^{k+1} }-\frac{ p^{n-k+1}
[k]_{p,q}}{[n-k+1]_{p,q}q^{k} }\right) \bigg\{ \frac{px}{q};\frac{
p^{n-k+1} [k]_{p,q}}{[n-k+1]_{p,q}q^{k} };\frac{ p^{n-k} [k+1]_{p,q}}{%
[n-k]_{p,q}q^{k+1} };f \bigg\}
\end{equation*}
and
\begin{equation*}
\frac{ p^{n-k} [k+1]_{p,q}}{[n-k]_{p,q}q^{k+1} }-\frac{ p^{n-k+1} [k]_{p,q}}{%
[n-k+1]_{p,q}q^{k} }=[n+1]_{p,q},
\end{equation*}
we have\newline
\newline
$L_n^{p,q}(f;x)-f\left(\frac{px}{q}\right)=-\frac{x^{n+1}}{\ell_n^{p,q}(x)} %
\left[\frac{px}{q};\frac{p[n]_{p,q}}{q^n};f\right] pq^{\frac{n(n-1)}{2}-n} $%
\newline
$+\frac{x}{\ell_n^{p,q}(x)}\sum_{k=0}^{n-1}\left \{ \left[\frac{px}{q};\frac{
p^{n-k+1} [k]_{p,q}}{[n-k+1]_{p,q}q^{k} };f\right] \frac{ p^{n-k}[n+1]_{p,q}%
}{[n-k]_{p,q}[n-k+1]_{p,q}q^{k+1} } \right\} p^{\frac{(n-k)(n-k-1)}{2}-(k-n)-1}q^{%
\frac{k(k-1)}{2}-k} \left[
\begin{array}{c}
n \\
k%
\end{array}%
\right] _{p,q} x^{k}.$\newline

\parindent=8mm which completes the proof.
\end{proof}
\section{Some Generalizations of $L_n^{p,q}$}

In this section, we present some generalizations of the operators $L_n^{p,q}$
based on $(p,q)$-integers similar to work done in \cite{n1, aral1}.

We consider a sequence of linear positive operators based on $(p,q)$%
-integers as follows:
\begin{align}\label{nas11}
L_{n }^{(p,q),\gamma}(f;x)=\frac{pq}{\ell _{n}^{p,q}(x)}\sum_{k=0}^{n}f\left(
\frac{p^{n-k+1}[k]_{p,q}+\gamma }{b_{n,k}}\right) p^{\frac{(n-k)(n-k-1)}{2}%
}q^{\frac{k(k-1)}{2}}\left[
\begin{array}{c}
n \\
k%
\end{array}%
\right] _{p,q}x^{k},~~~~~~(\gamma \in \mathbb{R})
\end{align}%
where $b_{n,k}$ satisfy the following conditions:
\begin{equation*}
p^{n-k+1}[k]_{p,q}+b_{n,k}=c_{n}~~~~~~\mbox{and}~~~~\frac{[n]_{p,q}}{c_{n}}%
\rightarrow 1~~~~~~\mbox{for}~~~~n\rightarrow \infty .
\end{equation*}%
It is easy to check that if $b_{n,k}=q^{k}[n-k+1]_{p,q}+\beta $ for any $n,k$
and $0<q<p\leq 1$, then $c_{n}=[n+1]_{p,q}+\beta $. If we choose $p=1$, then the 
operators reduce to the generalization of $q$-BBH opeartors defined in \cite%
{aral1}, and which turn out to be D. D. Stancu-type generalization of
Bleimann, Butzer, and Hahn operators based on $q$-integers \cite{n2}. If we
choose $\gamma =0,~~~q=1$ as in \cite{aral1} for $p=1$, then the operators
become the special case of the Balázs-type generalization of the $q$-BBH
operators \cite{aral1} given in \cite{n1}.

\begin{theorem}
Let $p=p_n~and~q=q_n$ satisfy \eqref{nas5} for $0<q_n<p_n\leq 1$ and let $%
L_n^{(p_n,q_n), \gamma}$ be defined by \eqref{nas11}. Then for any function $f \in
\widetilde{W}_{\alpha, [0,\infty)}$, we have\\ \newline
$\lim_{n}\parallel L_{n }^{(p_{n},q_{n}),\gamma}(f;x)-f(x)\parallel _{C_{B}}\leq 3M $\\%
\newline
$\times \max \left\{ \left( \frac{[n]_{p_{n},q_{n}}}{c_{n}+\gamma }\right)
^{\alpha }\left( \frac{\gamma }{[n]_{p_{n},q_{n}}}\right) ^{\alpha },%
\bigg{|}1-\frac{[n+1]_{p_{n},q_{n}}}{c_{n}+\gamma }\bigg{|}^{\alpha }\left(
\frac{p_{n}q_{n}[n]_{p_{n},q_{n}}}{[n+1]_{p_{n},q_{n}}}\right) ^{\alpha },1-2%
\frac{p_{n}q_{n}[n]_{p_{n},q_{n}}}{[n+1]_{p_{n},q_{n}}}+\frac{%
p_{n}q_{n}[n]_{p_{n},q_{n}}[n-1]_{p_{n},q_{n}}}{[n+1]_{p_{n},q_{n}}^{2}}\right\}.$
\end{theorem}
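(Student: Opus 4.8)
The plan is to follow the argument of Theorem~\ref{main8} and Corollary~\ref{main12}, transporting that estimate to the shifted nodes of $L_n^{(p_n,q_n),\gamma}$. Exactly as for $L_n^{p,q}$, the Euler identity \eqref{nas4} gives $L_n^{(p_n,q_n),\gamma}(1;x)=p_nq_n$, so that
\begin{equation*}
\bigl|L_n^{(p_n,q_n),\gamma}(f;x)-f(x)\bigr|\le L_n^{(p_n,q_n),\gamma}\bigl(|f(t)-f(x)|;x\bigr)+|1-p_nq_n|\,\|f\|_{C_B},
\end{equation*}
and the last term is $o(1)$ under \eqref{nas5}. Since $E=[0,\infty)$ forces $d(x,E)=0$, membership $f\in\widetilde W_{\alpha,[0,\infty)}$ gives the global estimate $|f(t)-f(x)|\le M\bigl|\tfrac{t}{1+t}-\tfrac{x}{1+x}\bigr|^{\alpha}$ for all $t,x\ge 0$ (the case $x_0=x$ of the bound used inside the proof of Theorem~\ref{main8}). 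Hence it suffices to estimate $L_n^{p_n,q_n}$-type sums of $\bigl|\tfrac{u_{n,k}}{1+u_{n,k}}-\tfrac{x}{1+x}\bigr|^{\alpha}$, where $u_{n,k}$ is the $k$-th node of \eqref{nas11}.

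The key computation is the node image: using the defining relation $p_n^{n-k+1}[k]_{p_n,q_n}+b_{n,k}=c_n$,
\begin{equation*}
\frac{u_{n,k}}{1+u_{n,k}}=\frac{p_n^{n-k+1}[k]_{p_n,q_n}+\gamma}{c_n+\gamma}=\frac{[n+1]_{p_n,q_n}}{c_n+\gamma}\,v_{n,k}+\frac{\gamma}{c_n+\gamma},\qquad v_{n,k}:=\frac{p_n^{n-k+1}[k]_{p_n,q_n}}{[n+1]_{p_n,q_n}}\in[0,1],
\end{equation*}
where $v_{n,k}$ is precisely the node image of the plain operator $L_n^{p_n,q_n}$ occurring in Lemma~\ref{main1}(\ref{main2}). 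By the triangle inequality and $(a+b+c)^{\alpha}\le a^{\alpha}+b^{\alpha}+c^{\alpha}$ for $0<\alpha\le 1$,
\begin{equation*}
\Bigl|\tfrac{u_{n,k}}{1+u_{n,k}}-\tfrac{x}{1+x}\Bigr|^{\alpha}\le\Bigl(\tfrac{|\gamma|}{c_n+\gamma}\Bigr)^{\alpha}+\Bigl|1-\tfrac{[n+1]_{p_n,q_n}}{c_n+\gamma}\Bigr|^{\alpha}v_{n,k}^{\alpha}+\Bigl|v_{n,k}-\tfrac{x}{1+x}\Bigr|^{\alpha}.
\end{equation*}

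Since $L_n^{(p_n,q_n),\gamma}$ carries exactly the same weights as $L_n^{p_n,q_n}$, summing the last display against those weights turns the right-hand side into three pieces, each expressible through $L_n^{p_n,q_n}$. The first is $p_nq_n\bigl(\tfrac{|\gamma|}{c_n+\gamma}\bigr)^{\alpha}$, and $\tfrac{|\gamma|}{c_n+\gamma}=\tfrac{[n]_{p_n,q_n}}{c_n+\gamma}\cdot\tfrac{|\gamma|}{[n]_{p_n,q_n}}$. The second is $\bigl|1-\tfrac{[n+1]_{p_n,q_n}}{c_n+\gamma}\bigr|^{\alpha}\,L_n^{p_n,q_n}\bigl((\tfrac{t}{1+t})^{\alpha};x\bigr)$; H\"older with exponents $\tfrac1\alpha,\tfrac1{1-\alpha}$ together with Lemma~\ref{main1}(1),(\ref{main2}) and $\tfrac{x}{1+x}\le 1$ bound it by $\bigl|1-\tfrac{[n+1]_{p_n,q_n}}{c_n+\gamma}\bigr|^{\alpha}\bigl(\tfrac{p_nq_n[n]_{p_n,q_n}}{[n+1]_{p_n,q_n}}\bigr)^{\alpha}(1+o(1))$. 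The third is $L_n^{p_n,q_n}\bigl(|\tfrac{t}{1+t}-\tfrac{x}{1+x}|^{\alpha};x\bigr)$; H\"older with exponents $\tfrac2\alpha,\tfrac2{2-\alpha}$ bounds it by $\bigl(L_n^{p_n,q_n}((\tfrac{t}{1+t}-\tfrac{x}{1+x})^{2};x)\bigr)^{\alpha/2}(1+o(1))$, and expanding the quadratic moment via Lemma~\ref{main1}(1),(\ref{main2}),(\ref{main4}) identifies it, up to factors tending to $1$, with the quantity $\delta_n(x)$ of Theorem~\ref{main9}; since $x\mapsto\tfrac{1+x}{p_n+q_nx}$ increases to $1/q_n$ on $[0,\infty)$ while $\tfrac{x}{1+x}<1$, one gets $\sup_{x\ge0}\delta_n(x)\le 1-2\tfrac{p_nq_n[n]_{p_n,q_n}}{[n+1]_{p_n,q_n}}+\tfrac{p_nq_n[n]_{p_n,q_n}[n-1]_{p_n,q_n}}{[n+1]_{p_n,q_n}^{2}}+o(1)$. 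Denoting by $A_n,B_n,C_n$ the three max-terms in the statement, these estimates give $\|L_n^{(p_n,q_n),\gamma}(f)-f\|_{C_B}\le M(A_n+B_n+C_n)+o(1)\le 3M\max\{A_n,B_n,C_n\}+o(1)$, and letting $n\to\infty$ yields the claim.

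The principal obstacle is the last estimate: controlling $L_n^{p_n,q_n}\bigl((\tfrac{t}{1+t}-\tfrac{x}{1+x})^{2};x\bigr)$ uniformly over $x\ge 0$ so that its supremum is captured by $C_n$; this requires the explicit expansion from Lemma~\ref{main1}, the monotonicity of $x\mapsto\tfrac{1+x}{p_n+q_nx}$, and absorbing the tail term $\tfrac{p_n^{n+2}q_n[n]_{p_n,q_n}}{[n+1]_{p_n,q_n}^{2}}$ into the $o(1)$. A second, purely routine, nuisance is that $L_n^{p_n,q_n}(1;x)=p_nq_n\ne 1$, so that factors $p_n,q_n,p_n^{n}$ are shed at several places; each of them tends to $1$ or $0$ by \eqref{nas5}, which is exactly why only the three displayed quantities survive in the limiting bound. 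One should also record, as in the discussion following Theorem~\ref{main8}, that for $E=[0,\infty)$ membership in $\widetilde W_{\alpha,E}$ indeed yields the global Lipschitz-in-$\tfrac{t}{1+t}$ estimate invoked at the outset.
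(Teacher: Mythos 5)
Your proposal is correct and follows essentially the same route as the paper: the paper inserts the two intermediate nodes $\tfrac{p_n^{n-k+1}[k]_{p_n,q_n}+\gamma}{b_{n,k}}\mapsto\tfrac{p_n^{n-k+1}[k]_{p_n,q_n}}{\gamma+b_{n,k}}\mapsto\tfrac{p_n^{n-k+1}[k]_{p_n,q_n}}{[n-k+1]_{p_n,q_n}q_n^k}$ and applies the Lipschitz bound to each gap, which under $t\mapsto\tfrac{t}{1+t}$ is exactly your three-term splitting of $\tfrac{u_{n,k}}{1+u_{n,k}}-\tfrac{x}{1+x}$ via $c_n+\gamma$ and $v_{n,k}$, followed by the same two H\"older applications and Corollary \ref{main12}. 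Your explicit treatment of the $\sup_{x\ge0}\delta_n(x)$ step and of the stray factors $p_nq_n, p_n^n$ is, if anything, more careful than the paper's, which leaves that passage implicit.
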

\begin{proof}

Using \eqref{nas3} and \eqref{nas11}, we have \\ \newline
$\mid L_{n }^{(p,q),\gamma}(f;x)-f(x)\mid $ \\ \newline
$\leq \frac{pq}{\ell _{n}^{p_{n},q_{n}}(x)}\sum_{k=0}^{n}\bigg{|}f\left(
\frac{p_{n}^{n-k+1}[k]_{p_{n},q_{n}}+\gamma }{b_{n,k}}\right) -f\left( \frac{%
p_{n}^{n-k+1}[k]_{p_{n},q_{n}}}{\gamma +b_{n,k}}\right) \bigg{|}p_{n}^{\frac{%
(n-k)(n-k-1)}{2}}q_{n}^{\frac{k(k-1)}{2}}\left[
\begin{array}{c}
n \\
k%
\end{array}%
\right] _{p_{n},q_{n}}x^{k}$
\begin{equation*}
+ \frac{pq}{\ell_n^{p_n,q_n}(x)}\sum_{k=0}^n \bigg{|} f \left(\frac{%
p_n^{n-k+1}[k]_{p_n,q_n}}{\gamma+ b_{n,k}}\right) -f\left(\frac{%
p_n^{n-k+1}[k]_{p_n,q_n}}{[n-k+1]_{p_n,q_n}q_n^{k}}\right)\bigg{|} p_n^{%
\frac{(n-k)(n-k-1)}{2}}q_n^{\frac{k(k-1)}{2}} \left[
\begin{array}{c}
n \\
k%
\end{array}%
\right] _{p_n,q_n} x^k,
\end{equation*}
we have \\ \newline
$\mid L_{n }^{(p,q),\gamma}(f;x)-f(x)\mid $ \\ \newline
$\leq \frac{pq}{\ell _{n}^{p_{n},q_{n}}(x)}\sum_{k=0}^{n}\bigg{|}f\left(
\frac{p_{n}^{n-k+1}[k]_{p_{n},q_{n}}+\gamma }{b_{n,k}}\right) -f\left( \frac{%
p_{n}^{n-k+1}[k]_{p_{n},q_{n}}}{\gamma +b_{n,k}}\right) \bigg{|}p_{n}^{\frac{%
(n-k)(n-k-1)}{2}+1}q_{n}^{\frac{k(k-1)}{2}+1}\left[
\begin{array}{c}
n \\
k%
\end{array}%
\right] _{p_{n},q_{n}}x^{k}$
\begin{equation*}
+ \frac{pq}{\ell_n^{p_n,q_n}(x)}\sum_{k=0}^n \bigg{|} f \left(\frac{%
p_n^{n-k+1}[k]_{p_n,q_n}}{\gamma+ b_{n,k}}\right) -f\left(\frac{%
p_n^{n-k+1}[k]_{p_n,q_n}}{[n-k+1]_{p_n,q_n}q_n^{k}}\right)\bigg{|} p_n^{%
\frac{(n-k)(n-k-1)}{2}+1}q_n^{\frac{k(k-1)}{2}+1} \left[
\begin{array}{c}
n \\
k%
\end{array}%
\right] _{p_n,q_n} x^k
\end{equation*}
$+ \mid L_n^{p_n,q_n}(f;x) -f(x) \mid.$\\ \newline
Now for
$f\in \widetilde{W}_{\alpha ,[0,\infty )}$, by using the Corollary
\ref{main12}, 
we can write\\ \newline
$\mid L_{n }^{(p,q),\gamma}(f;x)-f(x)\mid $\\ \newline
$\leq \frac{M}{\ell _{n}^{p_{n},q_{n}}(x)}\sum_{k=0}^{n}\bigg{|}\frac{%
p_{n}^{n-k+1}[k]_{p_{n},q_{n}}+\gamma }{p_{n}^{n-k+1}[k]_{p_{n},q_{n}}+%
\gamma +b_{n,k}}-\frac{p_{n}^{n-k+1}[k]_{p_{n},q_{n}}}{\gamma
+p_{n}^{n-k+1}[k]_{p_{n},q_{n}}+b_{n,k}}\bigg{|}^{\alpha }p_{n}^{\frac{%
(n-k)(n-k-1)}{2}+1}q_{n}^{\frac{k(k-1)}{2}+1}\left[
\begin{array}{c}
n \\
k%
\end{array}%
\right] _{p_{n},q_{n}}x^{k}$\\
$+\frac{M}{\ell _{n}^{p_{n},q_{n}}(x)}\sum_{k=0}^{n}\bigg{|}\frac{%
p_{n}^{n-k+1}[k]_{p_{n},q_{n}}}{p_{n}^{n-k+1}[k]_{p_{n},q_{n}}+\gamma
+b_{n,k}}-\frac{p_{n}^{n-k+1}[k]_{p_{n},q_{n}}}{%
p_{n}^{n-k+1}[k]_{p_{n},q_{n}}+[n-k+1]_{p_{n},q_{n}}q_{n}^{k}}\bigg{|}$\\
$\times p_{n}^{%
\frac{(n-k)(n-k-1)}{2}+1}q_{n}^{\frac{k(k-1)}{2}+1}\left[
\begin{array}{c}
n \\
k%
\end{array}%
\right] _{p_{n},q_{n}}x^{k} +M\delta _{n}^{\frac{\alpha }{2}}(x).$\newline

This implies that \\ \newline
$\mid L_{n }^{(p,q),\gamma}(f;x)-f(x)\mid \leq M\left( \frac{[n]_{p_{n},q_{n}}%
}{c_{n}+\gamma }\right) ^{\alpha }\left( \frac{\gamma }{[n]_{p_{n},q_{n}}}%
\right) ^{\alpha }$
\begin{equation*}
+\frac{M}{\ell _{n}^{p_{n},q_{n}}(x)}\bigg{|}1-\frac{[n+1]_{p_{n},q_{n}}}{%
c_{n}+\gamma }\bigg{|}^{\alpha }\sum_{k=0}^{n}\left( \frac{%
p_{n}^{n-k+1}[k]_{p_{n},q_{n}}}{[n+1]_{p_{n},q_{n}}}\right) ^{\alpha }p_{n}^{%
\frac{(n-k)(n-k-1)}{2}+1}q_{n}^{\frac{k(k-1)}{2}+1}\left[
\begin{array}{c}
n \\
k%
\end{array}%
\right] _{p_{n},q_{n}}x^{k}+M\delta _{n}^{\frac{\alpha }{2}}(x)
\end{equation*}

\begin{equation*}
=M\left( \frac{[n]_{p_{n},q_{n}}}{c_{n}+\gamma }\right) ^{\alpha }\left(
\frac{\gamma }{[n]_{p_{n},q_{n}}}\right) ^{\alpha }+M\bigg{|}1-\frac{%
[n+1]_{p_{n},q_{n}}}{c_{n}+\gamma }\bigg{|}^{\alpha
}L_{n}^{p_{n},q_{n}}\left( \left( \frac{t}{1+t}\right) ^{\alpha };x\right)
+M\delta _{n}^{\frac{\alpha }{2}}(x).
\end{equation*}%
Using the H\"{o}lder's inequality for $p=\frac{1}{\alpha },~~~q=\frac{1}{1-\alpha }
$, we get\\ \newline
$\mid L_{n }^{(p,q),\gamma}(f;x)-f(x)\mid $\\ \newline
$\leq M\left( \frac{[n]_{p_{n},q_{n}}}{c_{n}+\gamma }\right) ^{\alpha }\left(
\frac{\gamma }{[n]_{p_{n},q_{n}}}\right) ^{\alpha }+M\bigg{|}1-\frac{%
[n+1]_{p_{n},q_{n}}}{c_{n}+\gamma }\bigg{|}^{\alpha
}L_{n}^{p_{n},q_{n}}\left( \frac{t}{1+t};x\right) ^{\alpha }\left(
L_{n}^{p_{n},q_{n}}(1;x)\right) ^{1-\alpha }+M\delta _{n}^{\frac{\alpha }{2}%
}(x)$\\
$\leq M\left( \frac{[n]_{p_{n},q_{n}}}{c_{n}+\gamma }\right) ^{\alpha }\left(
\frac{\gamma }{[n]_{p_{n},q_{n}}}\right) ^{\alpha }+M\bigg{|}1-\frac{%
[n+1]_{p_{n},q_{n}}}{c_{n}+\gamma }\bigg{|}^{\alpha }\left( \frac{%
p_{n}q_{n}[n]_{p_{n},q_{n}}}{[n+1]_{p_{n},q_{n}}}\frac{x}{1+x}\right) ^{\alpha
}+M\delta _{n}^{\frac{\alpha }{2}}(x),$\\
which completes the proof.
\end{proof}
\section{Construction of the bivariate operators}

In what follows we construct the bivariate extension of the operators (1.3). We will introduce the statistical convergence of the operators to a function $f$ and investigate the statistical rate of convergence of these operators.

Let $\mathbb{R}_+^2 = [0, \infty)\times [0, \infty),  f: \mathbb{R}_+^2\rightarrow \mathbb{R}$ and $0< p_{n_1}, q_{n_1}; p_{n_2}, q_{n_2}\leq1$. Then we define the bivariate companion of the operators (1.3) as follows:
\begin{eqnarray*}
L_{n_1, n_2}(f; p_{n_1}, p_{n_2}; q_{n_1}, q_{n_2}; x, y)=
 \frac{p_{n_1}p_{n_2}q_{n_1}q_{n_2}}{l_{n_1}^{p_{n_1}, q_{n_1}}\times l_{n_2}^{p_{n_2}, q_{n_2}}}
 \sum_{k_1=0}^{n_1}
 \sum_{k_2=0}^{n_2}
  f( \frac{
p_{n_1}^{n_1-k_1+1}[k_1]_{p_{n_1},q_{n_1}}}{[n_1-k_1+1]_{p_{n_1},q_{n_1}}q_{n_1}^{k_1} },\\
 \frac{p_{n_2}^{n_2-k_2+1}[k_2]_{p_{n_2},q_{n_2}}}{[n_2-k_2+1]_{p_{n_2},q_{n_2}}q_{n_2}^{k_2}}) 
~p_{n_1}^{\frac{(n_1-k_1)(n_1-k_1-1)}{2}}p_{n_2}^{\frac{(n_2-k_2)(n_2-k_2-1)}{2}}
q_{n_1}^{\frac{k_1(k_1-1)}{2}}q_{n_2}^{\frac{k_2(k_2-1)}{2}}
\left[
\begin{array}{c}
n_1 \\
k_1%
\end{array}%
\right] _{p_{n_1},q_{n_1}}\\
\times\left[
\begin{array}{c}
n_2 \\
k_2%
\end{array}%
\right] _{p_{n_2},q_{n_2}} x^{k_1}y^{k_2},
\end{eqnarray*}
where
$l_{n_1}^{p_{n_1}, q_{n_1}} = \prod_{s=0}^{n_1-1} (p_{n_1}^s+ q_{n_1}^sx )$ and $l_{n_2}^{p_{n_2}, q_{n_2}} = \prod_{s=0}^{n_2-1} (p_{n_2}^s+ q_{n_2}^sy )$.

For $K = [0, \infty)\times [0, \infty)$, the modulus of continuity for the bivariate case is defined as
\begin{equation*}
\omega_2(g; \delta_1, \delta_2)= sup\{|g(u_1, v_1)-g(u_2, v_2)|: (u_1, v_1),(u_2, v_2)\in K ~and~ |u_1-u_2|\leq \delta_1, |v_1-v_2|\leq \delta_2 \},
\end{equation*}
where, for each $g\in H_{\omega_2}, ~\omega_2(g; \delta_1, \delta_2)$ satisfies
\begin{equation*}
|g(u_1, v_1)-g(u_2, v_2)| \leq \omega_2\left(g \bigg|\frac{u_1}{1+u_1}- \frac{u_2}{1+u_2}\bigg|, \bigg|\frac{v_1}{1+v_1}- \frac{v_2}{1+v_2}\bigg| \right).
\end{equation*}
For detailed study of modulus of continuity for the bivariate analogue one is referred to \cite{gal}.

The first Korovkin type theorem for the statistical approximation for the bivariate analogue of linear positive operators defined in the space $H_{\omega_2}$ was obtained by Erkus and Duman \cite{erkus} which is as follows.

\begin{theorem}
Let $\{L_n\}$ be a sequence of positive linear operators from $H_{\omega_2}$ into $C_B(K)$. Then, for each $g\in H_{\omega_2}$,
\begin{equation*}
st-\lim_n \parallel L_n(g)-g \parallel=0,
\end{equation*}
holds if the following is satisfied
\begin{equation*}
st-\lim_n \parallel L_n(g_j)-g_j \parallel=0,~~~~for~~ j=0, 1, 2, 3
\end{equation*}
where
\begin{equation}
g_0(u, v)=0, ~~g_1(u, v)= \frac{u}{1+u}, ~~g_2(u, v)= \frac{v}{1+v}, ~~g_3(u, v)= (\frac{u}{1+u})^2+ (\frac{v}{1+v})^2.
\end{equation}
\end{theorem}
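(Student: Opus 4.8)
The plan is to carry out the classical Korovkin argument, with the Euclidean distance $|u-x|$ replaced throughout by the chordal quantity $\big|\tfrac{u}{1+u}-\tfrac{x}{1+x}\big|$ that governs membership in $H_{\omega_2}$, and then to translate the resulting pointwise estimate into a statement about natural densities exactly as in the proof of the one-variable statistical Korovkin theorem above.

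First I would fix $g\in H_{\omega_2}$; as in the one-variable case $H_{\omega_2}\subseteq C_B(K)$, so $g$ is bounded on $K$. For a prescribed $\varepsilon>0$ I would choose $\delta_1,\delta_2>0$ so small that $\omega_2(g;\delta_1,\delta_2)<\varepsilon$, and split the argument into the case $\big|\tfrac{u}{1+u}-\tfrac{x}{1+x}\big|\le\delta_1$, $\big|\tfrac{v}{1+v}-\tfrac{y}{1+y}\big|\le\delta_2$ versus its complement. Using the defining inequality of $H_{\omega_2}$ and monotonicity of $\omega_2$ in the first case, and the crude bound $|g(u,v)-g(x,y)|\le2\|g\|$ in the second, one obtains a constant $C=2\|g\|/\min(\delta_1^{2},\delta_2^{2})$ with
\[
|g(u,v)-g(x,y)|\le\varepsilon+C\left[\Big(\tfrac{u}{1+u}-\tfrac{x}{1+x}\Big)^{2}+\Big(\tfrac{v}{1+v}-\tfrac{y}{1+y}\Big)^{2}\right]\qquad\text{for all }(u,v),(x,y)\in K .
\]
This is the only place where the fine structure of $\omega_2$ and $H_{\omega_2}$ enters, and it is the main obstacle of the proof: the quadratic majorant must be made uniform over the unbounded domain $K$, with only the four functions $g_0,\dots,g_3$ available to absorb it.

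Next I would apply the positive linear operator $L_n$ in the variables $(u,v)$ to this inequality (using positivity and linearity) and write $L_n(g;x,y)-g(x,y)=\big[L_n(g;x,y)-g(x,y)L_n(g_0;x,y)\big]+g(x,y)\big[L_n(g_0;x,y)-g_0(x,y)\big]$, where $g_0$ denotes the constant function $1$. The decisive identity is the expansion
\[
L_n\!\left(\Big(\tfrac{u}{1+u}-\tfrac{x}{1+x}\Big)^{2}+\Big(\tfrac{v}{1+v}-\tfrac{y}{1+y}\Big)^{2};x,y\right)=\big(L_n(g_3)-g_3\big)-2g_1\big(L_n(g_1)-g_1\big)-2g_2\big(L_n(g_2)-g_2\big)+g_3\big(L_n(g_0)-g_0\big),
\]
in which the ``constant'' remainder cancels precisely because $g_3=g_1^{2}+g_2^{2}$ on $K$. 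Since $0\le g_1,g_2<1$ and $0\le g_3<2$ on $K$, taking suprema over $(x,y)\in K$ and using $\|L_n(g_0)\|\le1+\|L_n(g_0)-g_0\|$ gives, for every $n$ with $\|L_n(g_0)-g_0\|<1$,
\[
\|L_n(g)-g\|\le2\varepsilon+\big(\|g\|+2C\big)\|L_n(g_0)-g_0\|+C\|L_n(g_3)-g_3\|+2C\|L_n(g_1)-g_1\|+2C\|L_n(g_2)-g_2\| .
\]

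Finally I would convert this into a density statement in the by-now familiar way. Given $\eta>0$, take $\varepsilon=\eta/4$ above and set $U=\{n:\|L_n(g)-g\|\ge\eta\}$ together with four sets $U_0,U_1,U_2,U_3$ on which $\|L_n(g_0)-g_0\|,\|L_n(g_1)-g_1\|,\|L_n(g_2)-g_2\|,\|L_n(g_3)-g_3\|$, respectively, exceed suitable fixed fractions of $\eta$, with $U_0$ further enlarged by all $n$ for which $\|L_n(g_0)-g_0\|\ge1$. The displayed inequality then forces $U\subseteq U_0\cup U_1\cup U_2\cup U_3$, so $\delta(U)\le\sum_{j=0}^{3}\delta(U_j)=0$ by the hypotheses $st-\lim_n\|L_n(g_j)-g_j\|=0$ for $j=0,1,2,3$. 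Hence $st-\lim_n\|L_n(g)-g\|=0$, which is the assertion.
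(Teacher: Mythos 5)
The paper does not prove this statement at all: it is quoted as a known result of Erku\c{s} and Duman (reference \cite{erkus} in the bibliography), so there is no in-paper proof to compare against. Your argument is correct and is essentially the standard one used to establish that result: the uniform majorant $|g(u,v)-g(x,y)|\le\varepsilon+C\big[(\tfrac{u}{1+u}-\tfrac{x}{1+x})^{2}+(\tfrac{v}{1+v}-\tfrac{y}{1+y})^{2}\big]$ (which is genuinely uniform on all of $K$ because the chordal map $u\mapsto u/(1+u)$ sends $[0,\infty)$ into $[0,1)$, so this step is less of an ``obstacle'' than you suggest), the expansion of $L_n$ applied to the quadratic kernel in terms of $L_n(g_j)-g_j$ with bounded coefficients, and the inclusion $U\subseteq U_0\cup U_1\cup U_2\cup U_3$ of density-zero sets. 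One point worth making explicit: the theorem as printed has $g_0(u,v)=0$, which is a typo for the constant function $g_0\equiv 1$ (as in Erku\c{s}--Duman); you silently and correctly read it that way, and you should say so, since with $g_0\equiv 0$ the $j=0$ hypothesis is vacuous and the step bounding $\|L_n(g_0)\|$ and passing from $L_n(g)-gL_n(g_0)$ to $L_n(g)-g$ would have nothing to lean on.
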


To study the statistical convergence of the bivariate operators, the following lemma is essential.
\begin{lemma} The bivariate operators defined above satisfy the followings:
\begin{enumerate}
\item $L_{n_1, n_2}(f_0; p_{n_1}, p_{n_2}; q_{n_1}, q_{n_2}; x, y)= p_{n_1} p_{n_2}q_{n_1} q_{n_2}$,
\item $L_{n_1, n_2}(f_1; p_{n_1}, p_{n_2}; q_{n_1}, q_{n_2}; x, y)= p_{n_1} p_{n_2}q_{n_1} q_{n_2} \frac{[n_1]_{p_{n_1},q_{n_1}}}{[n_1+1]_{p_{n_1}, q_{n_1}}} \frac{x}{1+x} $,
\item $L_{n_1, n_2}(f_2; p_{n_1}, p_{n_2}; q_{n_1}, q_{n_2}; x, y)= p_{n_1} p_{n_2}q_{n_1} q_{n_2}\frac{[n_2]_{p_{n_2},q_{n_2}}}{[n_2+1]_{p_{n_2}, q_{n_2}}} \frac{y}{1+y}$
 \item $L_{n_1, n_2}(f_3; p_{n_1}, p_{n_2}; q_{n_1}, q_{n_2}; x, y)= p_{n_1}^3 p_{n_2}q_{n_1}^3 q_{n_2} \frac{[n_1]_{p_{n_1}, q_{n_1}}[n_1-1]_{p_{n_1}, q_{n_1}}}{[n_1+1]_{p_{n_1}, q_{n_1}}^2} \frac{x^2}{(1+x)(p_{n_1}+q_{n_1}x)} + p_{n_1} p_{n_2}q_{n_1} q_{n_2} \frac{[n_1]_{p_{n_1},q_{n_1}}}{[n_1+1]_{p_{n_1}, q_{n_1}}^2} \frac{x}{1+x} + p_{n_1} p_{n_2}^3q_{n_1} q_{n_2}^3 \frac{[n_2]_{p_{n_2}, q_{n_2}}[n_2-1]_{p_{n_2}, q_{n_2}}}{[n_2+1]_{p_{n_2}, q_{n_2}}^2} \frac{y^2}{(1+y)(p_{n_2}+q_{n_2}y)} + p_{n_1} p_{n_2}q_{n_1} q_{n_2}\frac{[n_2]_{p_{n_2},q_{n_2}}}{[n_2+1]_{p_{n_2}, q_{n_2}}} \frac{y}{1+y} $.
\end{enumerate}
\end{lemma}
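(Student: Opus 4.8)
The plan is to exploit the tensor product structure of $L_{n_1,n_2}$: its kernel is literally the product of the two one-dimensional $(p,q)$-BBH kernels of \eqref{nas3}, one built from $(p_{n_1},q_{n_1},x)$ and one from $(p_{n_2},q_{n_2},y)$, while $\ell_{n_1}^{p_{n_1},q_{n_1}}$ depends only on $x$ and $\ell_{n_2}^{p_{n_2},q_{n_2}}$ only on $y$. Thus the first and essentially only substantive step is to record the factorization identity: for any $g$ and $h$,
\begin{equation*}
L_{n_1,n_2}\!\left(g(u)h(v);p_{n_1},p_{n_2};q_{n_1},q_{n_2};x,y\right)=L_{n_1}^{p_{n_1},q_{n_1}}(g;x)\cdot L_{n_2}^{p_{n_2},q_{n_2}}(h;y).
\end{equation*}
This is obtained by evaluating $f(u,v)=g(u)h(v)$ at the nodes, so that the summand splits as a $k_1$-term in $(p_{n_1},q_{n_1},x)$ times a $k_2$-term in $(p_{n_2},q_{n_2},y)$; the double sum then becomes a product of two single sums, and the constant $p_{n_1}p_{n_2}q_{n_1}q_{n_2}$ out front is exactly the product of the two normalizing constants $p_{n_i}q_{n_i}$ of \eqref{nas3}, so that pulling out $\ell_{n_1}^{p_{n_1},q_{n_1}}(x)/(p_{n_1}q_{n_1})$ and $\ell_{n_2}^{p_{n_2},q_{n_2}}(y)/(p_{n_2}q_{n_2})$ leaves precisely $L_{n_1}^{p_{n_1},q_{n_1}}(g;x)\,L_{n_2}^{p_{n_2},q_{n_2}}(h;y)$.

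Granting this identity and the linearity of $L_{n_1,n_2}$, all four assertions reduce to the one-variable formulas already proved in Lemma \ref{main1}. For $(1)$ take $g\equiv h\equiv 1$ and apply Lemma \ref{main1}$(1)$ to each factor. For $(2)$ take $g(u)=\tfrac{u}{1+u}$, $h\equiv 1$ and combine Lemma \ref{main1}$(2)$ with Lemma \ref{main1}$(1)$; the case $(3)$ is symmetric, with $g\equiv 1$ and $h(v)=\tfrac{v}{1+v}$. For $(4)$ write
\begin{equation*}
f_3(u,v)=\left(\tfrac{u}{1+u}\right)^{2}\!\cdot 1+1\cdot\left(\tfrac{v}{1+v}\right)^{2},
\end{equation*}
so that by linearity and the factorization identity
\begin{equation*}
L_{n_1,n_2}(f_3;\cdot)=L_{n_1}^{p_{n_1},q_{n_1}}\!\left(\left(\tfrac{t}{1+t}\right)^{2};x\right)L_{n_2}^{p_{n_2},q_{n_2}}(1;y)+L_{n_1}^{p_{n_1},q_{n_1}}(1;x)\,L_{n_2}^{p_{n_2},q_{n_2}}\!\left(\left(\tfrac{t}{1+t}\right)^{2};y\right),
\end{equation*}
and substitute Lemma \ref{main1}$(3)$ and $(1)$ into each summand and collect.

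I expect the only point needing care to be the verification of the factorization identity --- namely, checking that the exponents of $p_{n_i}$, $q_{n_i}$ and the $(p,q)$-binomial coefficients in the double sum genuinely decouple into a block depending on $(p_{n_1},q_{n_1},x)$ and a block depending on $(p_{n_2},q_{n_2},y)$, and that the leading constant matches up. This is immediate from the definition of $L_{n_1,n_2}$ but is worth writing out once; afterwards, deriving $(1)$--$(4)$ is a routine substitution and a bookkeeping of the powers of $p_{n_i}$ and $q_{n_i}$ supplied by Lemma \ref{main1}, requiring no further idea.
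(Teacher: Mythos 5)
Your factorization identity is correct, and it is the right way to prove this lemma: the double sum genuinely splits because the prefactor $\frac{p_{n_1}p_{n_2}q_{n_1}q_{n_2}}{\ell_{n_1}^{p_{n_1},q_{n_1}}\ell_{n_2}^{p_{n_2},q_{n_2}}}$ is exactly the product of the two univariate prefactors $\frac{pq}{\ell_n^{p,q}}$ of \eqref{nas3} and the nodes and kernels decouple. Note that the paper itself gives no argument at all here --- its ``proof'' is a one-line deferral to Ersan's bivariate $q$-BBH paper --- so your write-up is more self-contained than the source, and the reduction of (1)--(4) to Lemma \ref{main1} by linearity is exactly the intended route.

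The gap is in the step you wave off as ``routine substitution and bookkeeping'': carried out against the paper's own Lemma \ref{main1}, it does \emph{not} reproduce the constants printed in the lemma you are asked to prove. For item (2), the factorization gives
\begin{equation*}
L_{n_1}^{p_{n_1},q_{n_1}}\left(\tfrac{t}{1+t};x\right)L_{n_2}^{p_{n_2},q_{n_2}}(1;y)
=p_{n_1}^{2}q_{n_1}\,\frac{[n_1]_{p_{n_1},q_{n_1}}}{[n_1+1]_{p_{n_1},q_{n_1}}}\,\frac{x}{1+x}\cdot p_{n_2}q_{n_2},
\end{equation*}
a leading constant $p_{n_1}^{2}p_{n_2}q_{n_1}q_{n_2}$, whereas the lemma asserts $p_{n_1}p_{n_2}q_{n_1}q_{n_2}$; item (3) has the symmetric mismatch in $p_{n_2}$. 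In item (4), Lemma \ref{main1}(3) times $L_{n_2}^{p_{n_2},q_{n_2}}(1;y)=p_{n_2}q_{n_2}$ gives a first term with constant $p_{n_1}^{2}q_{n_1}^{3}p_{n_2}q_{n_2}$ (the lemma states $p_{n_1}^{3}q_{n_1}^{3}p_{n_2}q_{n_2}$) and a second term with constant $p_{n_1}^{n_1+2}q_{n_1}p_{n_2}q_{n_2}$ (the lemma has dropped the factor $p_{n_1}^{n_1+1}$ altogether), and likewise for the $y$-terms. So either the lemma's stated constants are typos --- plausible, given that the same source writes $g_0(u,v)=0$ where it must mean $1$ --- or your reduction is missing a normalization; in either case you cannot end at ``collect,'' because what you collect is not what is claimed. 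Carry out the substitution explicitly and state precisely which constants your argument actually establishes.
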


\begin{proof} Exploiting the proofs for the bivariate operators in \cite{ersan}, the above can be easily established. So we skip the proof.
\end{proof}

Now let the sequences
\begin{equation*}
p=(p_{n_1}), p=(p_{n_2}), q=(q_{n_1}),  q=(q_{n_2})
\end{equation*}
be statistically convergent to unity but not convergent in usual sense, so we can write them for $0<p_{n_1}, q_{n_1}, p_{n_2}, q_{n_2}\leq1$ as
\begin{equation}
st-\lim_{n_1}p_{n_1}= st-\lim_{n_1}q_{n_1}=st-\lim_{n_2}p_{n_2}= st-\lim_{n_2}q_{n_2}=1.
\end{equation}

Now making use of the proof of Theorem (2.2) and conditions (5.2), we establish the statistical convergence of the bivariate operators introduced above.
\begin{theorem} Let $p=(p_{n_1}),~ p=(p_{n_2}),~ q=(q_{n_1})~and ~ q=(q_{n_2})$ be the sequences subject to conditions (2.10) and let $L_{n_1, n_2}$ be the sequence of linear positive operators from $H_{\omega_2}(R_{+}^2)$ into $C_B(R_+)$. Then for each $g\in H_{\omega_2}$,
\begin{equation*}
st-\lim_{n_1, n_2}\|L_{n_1, n_2}(g)-g\| = 0
\end{equation*}
\end{theorem}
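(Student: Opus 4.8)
The plan is to mimic the proof of Theorem 2.2 verbatim, but now invoke the bivariate Korovkin-type statistical approximation theorem of Erkus and Duman (Theorem 5.1 above) instead of the classical one. Thus it suffices to show that
\begin{equation*}
st-\lim_{n_1,n_2}\|L_{n_1,n_2}(g_j)-g_j\|=0,\qquad j=0,1,2,3,
\end{equation*}
for the four test functions $g_0,g_1,g_2,g_3$ listed in (5.1). Each of these reduces, via the explicit formulas in the preceding Lemma, to an estimate involving only the scalar quantities $p_{n_i}q_{n_i}$ and the ratios $[n_i]_{p_{n_i},q_{n_i}}/[n_i+1]_{p_{n_i},q_{n_i}}$ and their squares, so the whole argument is a bookkeeping exercise on top of the one-dimensional computation.

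First I would handle $j=0$: by part (1) of the Lemma, $L_{n_1,n_2}(g_0;\cdot)=p_{n_1}p_{n_2}q_{n_1}q_{n_2}$ (reading $g_0\equiv 1$ in the sense used for the univariate operator), so
\begin{equation*}
\|L_{n_1,n_2}(g_0)-g_0\|\le |1-p_{n_1}p_{n_2}q_{n_1}q_{n_2}|,
\end{equation*}
and since each factor statistically converges to $1$ by (5.2), a finite product of statistically convergent sequences is statistically convergent to the product of the limits, hence the right side has statistical limit $0$. Next, for $j=1$, part (2) of the Lemma gives $\|L_{n_1,n_2}(g_1)-g_1\|\le \sup_{x\ge0}\big|p_{n_1}p_{n_2}q_{n_1}q_{n_2}\tfrac{[n_1]_{p_{n_1},q_{n_1}}}{[n_1+1]_{p_{n_1},q_{n_1}}}-1\big|$ because $x/(1+x)\le 1$; exactly as in the proof of Theorem 2.2 one rewrites this as $1-p_{n_1}p_{n_2}q_{n_1}q_{n_2}\tfrac{[n_1]_{p_{n_1},q_{n_1}}}{[n_1+1]_{p_{n_1},q_{n_1}}}$ for $n$ large, and using the identity (2.1)-type expansion $\tfrac{[n_1]_{p_{n_1},q_{n_1}}}{[n_1+1]_{p_{n_1},q_{n_1}}}=\tfrac1{q_{n_1}}(1-p_{n_1}^{n_1}/[n_1+1]_{p_{n_1},q_{n_1}})$ together with $p_{n_1},q_{n_1}\to 1$ statistically, one concludes the statistical limit is $0$; by symmetry the same works for $j=2$ with the roles of $(n_1,x)$ and $(n_2,y)$ interchanged.

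The case $j=3$ is the only one with real content, and it is the step I expect to be the main obstacle — not because it is hard, but because it involves the most terms. Using part (4) of the Lemma and the bound $g_3(x,y)=(\tfrac{x}{1+x})^2+(\tfrac{y}{1+y})^2$, I would split
\begin{equation*}
\|L_{n_1,n_2}(g_3)-g_3\|\le \Big\|L_{n_1,n_2}\big((\tfrac{t}{1+t})^2;x\big)-(\tfrac{x}{1+x})^2\Big\|+\Big\|L_{n_1,n_2}\big((\tfrac{s}{1+s})^2;y\big)-(\tfrac{y}{1+y})^2\Big\|,
\end{equation*}
and observe that each of the two pieces, after factoring out the extra constants $p_{n_j}p_{n_{j'}}q_{n_j}q_{n_{j'}}$ coming from the bivariate normalization, is literally the quantity estimated for $\nu=2$ in the proof of Theorem 2.2. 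Hence one writes it as a sum $\alpha_{n_i}+\beta_{n_i}+\gamma_{n_i}$ of three sequences each having statistical limit $0$ by (2.1)/(5.2), defines the sets $U,U_1,\dots,U_6$ exactly as before with $\varepsilon/6$ in place of $\varepsilon/3$, uses $U\subseteq \bigcup_{i=1}^6 U_i$ and subadditivity of natural density, and concludes $st-\lim_{n_1,n_2}\|L_{n_1,n_2}(g_3)-g_3\|=0$. With all four test-function conditions verified, Theorem 5.1 yields $st-\lim_{n_1,n_2}\|L_{n_1,n_2}(g)-g\|=0$ for every $g\in H_{\omega_2}$, completing the proof.
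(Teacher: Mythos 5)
Your proof is correct and takes exactly the route the paper intends: the paper in fact omits this proof entirely, stating only that it follows from Lemma 5.2 by an argument similar to that of Theorem 2.2, and your write-up supplies precisely those details (verifying the four test-function conditions of the Erkus--Duman theorem, including the sensible reading of the evident typo $g_0\equiv 1$ in (5.1), and the density-subadditivity argument with $\varepsilon/6$). No gaps.
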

\begin{proof}
With the aid of the Lemma (5.2), a proof similar to the proof of the Theorem (2.2) can be easily obtained. So we shall omit the proof.
\end{proof}

\textbf{Rates of convergence of the bivariate operators} \newline
For any $g \in H_{\omega_2}(R_{+}^2)$, the modulus of continuity of the bivariate analogue is defined as:
\begin{eqnarray*}
\tilde{\omega}(g; \delta_1, \delta_2)= \sup_{x_1, x_2\geq0} \bigg\{|g(x_1, y_1) - g(x_2, y_2)|: \left|\frac{x_1}{1+x_1}- \frac{x_2}{1+x_2}\right|\leq \delta_1, \left| \frac{y_1}{1+y_1} - \frac{y_2}{1+y_2}\right|\leq \delta_2, \\
~~ (x_1, y_1),(x_2, y_2)\in H_{\omega_2}(R_{+}^2)\bigg\}
\end{eqnarray*}
For details of this sort of modulus, one is referred to \cite{gal}.

Two chief properties of $\tilde{\omega}(g; \delta_1, \delta_2)$ are
\begin{enumerate}
\item $\tilde{\omega}(g; \delta_1, \delta_2)\rightarrow0 $ ~as $\delta_1\rightarrow0$ ~ and $\delta_2\rightarrow0$ and
\item $|g(x_1, y_1) - g(x_2, y_2)|\leq \tilde{\omega}(g; \delta_1, \delta_2) \left(1+ \frac{\left|\frac{x_1}{1+x_1}- \frac{x_2}{1+x_2}\right|}{\delta_1}\right) \left( 1+\frac{\left| \frac{y_1}{1+y_1} - \frac{y_2}{1+y_2}\right|}{\delta_2}\right)$.
\end{enumerate}

Now in the following theorem we study the rate of statistical convergence of the bivariate operators through modulus of continuity in $H_{\omega_2}$.
\begin{theorem}
Let $p=(p_{n_1}), p=(p_{n_2}), q=(q_{n_1}),  q=(q_{n_2})$ be four sequences obeying conditions of (5.2). Then we have
\begin{equation*}
|L_{n_1, n_2}(f; p_{n_1}, p_{n_2}; q_{n_1}, q_{n_2}; x, y)- f(x, y)|\leq 4 p_{n_1}^2 p_{n_2}^2 q_{n_1}^2 q_{n_2}^2 ~\omega(f; \sqrt{\delta_{n_1}(x)}, \sqrt{\delta_{n_2}(x)}),
\end{equation*}
where
\begin{eqnarray*}
\delta_{n_1}(x) = \frac{x^2}{(1+x)^2} \left( p_{n_1}^2q_{n_1}^2 \frac{(1+x)}{p_{n_1}+q_{n_1}x} \frac{[n_1]_{p_{n_1},q_{n_1}} [n_1 -1]_{p_{n_1},q_{n_1}}}{[n_1+1]_{p_{n_1},q_{n_1}}^2} - 2\frac{[n_1]_{p_{n_1},q_{n_1}}}{[n_1+1]_{p_{n_1},q_{n_1}}} +1\right)\\
+ \frac{x}{1+x} \frac{[n_1]_{p_{n_1},q_{n_1}}}{[n_1+1]_{p_{n_1},q_{n_1}}^2},
\end{eqnarray*}
\begin{eqnarray*}
\delta_{n_2}(y) = \frac{y^2}{(1+y)^2} \left( p_{n_2}^2q_{n_2}^2 \frac{(1+y)}{p_{n_2}+q_{n_1}y} \frac{[n_2]_{p_{n_2},q_{n_2}} [n_2 -1]_{p_{n_2},q_{n_2}}}{[n_1+1]_{p_{n_2},q_{n_2}}^2} - 2\frac{[n_2]_{p_{n_2},q_{n_2}}}{[n_2+1]_{p_{n_2},q_{n_2}}} +1\right)\\
+ \frac{y}{1+y} \frac{[n_2]_{p_{n_2},q_{n_2}}}{[n_2+1]_{p_{n_2},q_{n_2}}^2}.
\end{eqnarray*}

\end{theorem}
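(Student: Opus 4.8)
The plan is to carry over, coordinate by coordinate, the argument used for Theorem \ref{main9}, the only genuinely new feature being the product structure of $L_{n_1,n_2}$. First I would record that, since $L_{n_1,n_2}$ is positive and linear with $L_{n_1,n_2}(f_0;p_{n_1},p_{n_2};q_{n_1},q_{n_2};x,y)=p_{n_1}p_{n_2}q_{n_1}q_{n_2}\le 1$ by Lemma 5.2, one has, exactly as in the proof of Theorem \ref{main9},
\begin{equation*}
|L_{n_1,n_2}(f;p_{n_1},p_{n_2};q_{n_1},q_{n_2};x,y)-f(x,y)|\le L_{n_1,n_2}\big(|f(t,s)-f(x,y)|;p_{n_1},p_{n_2};q_{n_1},q_{n_2};x,y\big).
\end{equation*}
Property (2) of the bivariate modulus $\widetilde{\omega}$ (written $\omega$ in the statement) then dominates the integrand by the separated expression
\begin{equation*}
\widetilde{\omega}(f;\delta_1,\delta_2)\Big(1+\tfrac{1}{\delta_1}\big|\tfrac{t}{1+t}-\tfrac{x}{1+x}\big|\Big)\Big(1+\tfrac{1}{\delta_2}\big|\tfrac{s}{1+s}-\tfrac{y}{1+y}\big|\Big).
\end{equation*}

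The structural fact I would then exploit is that $L_{n_1,n_2}$ factorises as the tensor product of the univariate operators $L_{n_1}^{p_{n_1},q_{n_1}}$ (acting in $x$) and $L_{n_2}^{p_{n_2},q_{n_2}}$ (acting in $y$): on a separated function $g(t)h(s)$ the double sum splits, so that $L_{n_1,n_2}(g\otimes h;\dots;x,y)=L_{n_1}^{p_{n_1},q_{n_1}}(g;x)\,L_{n_2}^{p_{n_2},q_{n_2}}(h;y)$, which is consistent with the entries of Lemma 5.2 (take $g=h=1$). Since the dominating bound above is of exactly this separated type, applying $L_{n_1,n_2}$ gives
\begin{equation*}
|L_{n_1,n_2}(f;\dots;x,y)-f(x,y)|\le\widetilde{\omega}(f;\delta_1,\delta_2)\,P_{n_1}(x)\,P_{n_2}(y),
\end{equation*}
where $P_{n_i}(z):=L_{n_i}^{p_{n_i},q_{n_i}}\!\big(1+\tfrac{1}{\delta_i}|\tfrac{t}{1+t}-\tfrac{z}{1+z}|;z\big)$ for $i=1,2$.

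Each one-variable factor $P_{n_i}(z)$ is then estimated exactly as in the proof of Theorem \ref{main9}: splitting off the constant term $L_{n_i}^{p_{n_i},q_{n_i}}(1;z)=p_{n_i}q_{n_i}$ and applying the Cauchy--Schwarz inequality to $L_{n_i}^{p_{n_i},q_{n_i}}(|\tfrac{t}{1+t}-\tfrac{z}{1+z}|;z)$ bounds $P_{n_i}(z)$ in terms of the second central moment $L_{n_i}^{p_{n_i},q_{n_i}}((\tfrac{t}{1+t}-\tfrac{z}{1+z})^2;z)$, which by the moment identities of Lemma \ref{main1} is the quantity $\delta_{n_i}(z)$ displayed in the statement (that is, $\delta_{n_1}(x)$ for $i=1$ and $\delta_{n_2}(y)$ for $i=2$), the normalisation constants being handled just as for $\delta_n(x)$ in Theorem \ref{main9}. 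Choosing $\delta_1=\sqrt{\delta_{n_1}(x)}$ and $\delta_2=\sqrt{\delta_{n_2}(y)}$ then collapses each $P_{n_i}(z)$ to an explicit bounded expression in $p_{n_i},q_{n_i}$, and multiplying the two factors while collecting the normalisation constants $0<p_{n_i}q_{n_i}\le 1$ produces the prefactor $4p_{n_1}^2p_{n_2}^2q_{n_1}^2q_{n_2}^2$, which is the assertion.

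I expect the only real obstacle to be the bookkeeping forced by the non-unital normalisation $L_{n_1,n_2}(f_0;\dots)=p_{n_1}p_{n_2}q_{n_1}q_{n_2}\neq 1$: the factors $p_{n_i}q_{n_i}$ must be tracked through the reduction step, the Cauchy--Schwarz step and the choice $\delta_i=\sqrt{\delta_{n_i}}$, and one must confirm that the $x$- and $y$-contributions of the bivariate moment data collapse precisely to $\delta_{n_1}(x)$ and $\delta_{n_2}(y)$ and that the surviving constants assemble into the stated prefactor. Once the tensor-product factorisation is recorded and $\delta_{n_i}$ is identified with the coordinatewise second central moment via Lemma \ref{main1}, the remainder is a verbatim, coordinatewise repetition of the computation in the proof of Theorem \ref{main9}, with no new difficulty.
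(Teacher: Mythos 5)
Your proposal follows the paper's own proof essentially verbatim: the paper likewise dominates $|L_{n_1,n_2}(f)-f|$ via property (2) of the bivariate modulus, splits the resulting bound into the two coordinate factors (the tensor-product factorisation you spell out is left implicit there), applies Cauchy--Schwarz to each first absolute central moment, and takes $\delta_i=\sqrt{\delta_{n_i}}$ to assemble the stated constant. The only difference is presentational -- you make the factorisation and the tracking of the normalisation constants $p_{n_i}q_{n_i}$ explicit, whereas the paper compresses all of this into ``on substituting this in the above inequality, we get the proof of the theorem.''
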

\begin{proof}
Using the property of the modulus above, we have
\begin{eqnarray*}
|L_{n_1, n_2}(f; p_{n_1}, p_{n_2}; q_{n_1}, q_{n_2}; x, y)- f(x, y)| 
\leq \omega (f: \delta_{n_1}, \delta_{n_2})\{ L_{n_1, n_2}(f_0; p_{n_1}, p_{n_2}; q_{n_1}, q_{n_2}; x, y)\\
+\frac{1}{\delta_{n_1}} L_{n_1,n_2}(|\frac{t}{1+t}- \frac{x}{1+x}|; p_{n_1}, p_{n_2}; q_{n_1}, q_{n_2}; x, y )\}
\{L_{n_1, n_2}(f_0; p_{n_1}, p_{n_2}; q_{n_1}, q_{n_2}; x, y)\\
+\frac{1}{\delta_{n_2}} L_{n_1,n_2}(|\frac{s}{1+s}- \frac{y}{1+y}|; p_{n_1}, p_{n_2}; q_{n_1}, q_{n_2}; x, y )\}.
\end{eqnarray*}
Applying the Cauchy-Schwarz inequality, we get
\begin{eqnarray*}
L_{n_1,n_2}(|\frac{t}{1+t}- \frac{x}{1+x}|; p_{n_1}, p_{n_2}; q_{n_1}, q_{n_2}; x, y)
\leq\left(L_{n_1,n_2}\left(\left(\frac{t}{1+t}- \frac{x}{1+x}\right)^2; p_{n_1}, p_{n_2}; q_{n_1}, q_{n_2}; x, y \right)\right)^{\frac{1}{2}}\\
 \times \left(L_{n_1, n_2}(f_0; p_{n_1}, p_{n_2}; q_{n_1}, q_{n_2}; x, y)\right)^{\frac{1}{2}}.
\end{eqnarray*}
On substituting this in the above inequality, we get the proof of the theorem.
\end{proof}

Now we shall study the statistical convergence of the bivariate operators using Lipschitz type maximal functions. The Lipschitz type maximal function space on $E\times E \subset \mathbb{R_+ \times \mathbb R_+}$ is defined as follows

\begin{equation}
\tilde{W}_{ \alpha_1, \alpha_2 , E^2} = \bigg\{ f: sup (1+t)^{\alpha_1} (1+s)^{\alpha_2}\tilde{f}_{\alpha_1, \alpha_2} (x, y) \leq M \frac{1}{(1+x)^{\alpha_1}} \frac{1}{(1+y)^{\alpha_2}}; ~~x, y \geq 0, (t, s)\in E^2\bigg\}.
\end{equation}
Where $f$ is a bounded and continuous function on $\mathbb{R_+},~M$ is a positive constant and $0\leq \alpha_1, \alpha_2 \leq1$ and $\tilde{f}_{\alpha_1, \alpha_2} (x, y)$ is defined as follows:
\begin{equation*}
\tilde{f}_{\alpha_1, \alpha_2} (x, y) = \sup_{t, s\geq0} \frac{| f(t, s)- f(x, y)|}{|t-x|^{\alpha_1} |s- y|^{\alpha_2}}.
\end{equation*}

\begin{theorem}
Let $p=(p_{n_1}), p=(p_{n_2}), q=(q_{n_1}),  q=(q_{n_2})$ be four sequences satisfying the conditions of (5.2). Then we have
\begin{eqnarray*}
|L_{n_1, n_2}(f; p_{n_1}, p_{n_2}; q_{n_1}, q_{n_2}; x, y)- f(x, y)|\leq Mp_{n_1}p_{n_2}q_{n_1}q_{n_2} \{ \delta_{n_1}(x)^{\frac{\alpha_1}{2}} \delta_{n_2}(y)^{\frac{\alpha_2}{2}}(p_{n_1}p_{n_2}q_{n_1}q_{n_2})\\
+ \delta_{n_1}(x)^{\frac{\alpha_1}{2}} d(y, E)^{\alpha_2} + \delta_{n_2}(y)^{\frac{alpha_2}{2}} d(x, E)^{\alpha_1} + 2 d(x, E)^{\alpha_1}d(y, E)^{\alpha_2} \},
\end{eqnarray*}
where $0\leq \alpha_1, \alpha_2\leq1$ and $\delta_{n_1}(x)$, $\delta_{n_2}(y)$ are defined as in Theorem (2.11) and $d(x, E) = inf \{ |x-y|: y\in E\}$.
\end{theorem}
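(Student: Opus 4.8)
The plan is to mimic the one-dimensional argument of Theorem \ref{main8} (and its Corollary \ref{main12}), now carried out separately in each of the two variables. First I would invoke the closure $\overline{E}$ and pick points $x_0,y_0\in\overline{E}$ with $|x-x_0|=d(x,E)$ and $|y-y_0|=d(y,E)$. Writing $f(t,s)-f(x,y)$ as a telescoping sum through the intermediate points $(x_0,y_0)$, $(x_0,y)$, and using the bivariate triangle inequality together with the defining bound for $f\in\widetilde{W}_{\alpha_1,\alpha_2,E^2}$, one controls $|f(t,s)-f(x,y)|$ by a sum of four terms, each a product of a power $|\tfrac{t}{1+t}-\tfrac{x_0}{1+x_0}|^{\alpha_1}$ or $|\tfrac{t}{1+t}-\tfrac{x}{1+x}|^{\alpha_1}$ with the analogous factor in $s$. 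Applying the positive linear operator $L_{n_1,n_2}$ and using its product structure (the kernel factors as a product of a $(p_{n_1},q_{n_1})$-kernel in $k_1$ and a $(p_{n_2},q_{n_2})$-kernel in $k_2$), the operator splits as a product of one-dimensional sums.

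Next I would estimate each resulting one-dimensional factor exactly as in the proof of Theorem \ref{main8}: use the elementary inequality $(a+b)^{\alpha}\le a^{\alpha}+b^{\alpha}$ to replace $|\tfrac{t}{1+t}-\tfrac{x_0}{1+x_0}|^{\alpha_1}$ by $|\tfrac{t}{1+t}-\tfrac{x}{1+x}|^{\alpha_1}+|\tfrac{x}{1+x}-\tfrac{x_0}{1+x_0}|^{\alpha_1}$, then apply H\"older's inequality with exponents $\tfrac{2}{\alpha_1}$ and $\tfrac{2}{2-\alpha_1}$ to bound $L^{p_{n_1},q_{n_1}}_{n_1}\big(|\tfrac{t}{1+t}-\tfrac{x}{1+x}|^{\alpha_1};x\big)$ by $\big(L^{p_{n_1},q_{n_1}}_{n_1}((\tfrac{t}{1+t}-\tfrac{x}{1+x})^2;x)\big)^{\alpha_1/2}(L^{p_{n_1},q_{n_1}}_{n_1}(1;x))^{(2-\alpha_1)/2}$, which by Lemma \ref{main1} equals $\delta_{n_1}(x)^{\alpha_1/2}$ up to the factor $(p_{n_1}q_{n_1})^{(2-\alpha_1)/2}$ coming from $L^{p_{n_1},q_{n_1}}_{n_1}(1;x)=p_{n_1}q_{n_1}$. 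The term with $x$ replaced by $x_0$ inside the fixed difference contributes $\dfrac{|x-x_0|^{\alpha_1}}{(1+x)^{\alpha_1}(1+x_0)^{\alpha_1}}\le d(x,E)^{\alpha_1}$ after using $L^{p_{n_1},q_{n_1}}_{n_1}(1;x)=p_{n_1}q_{n_1}$. Doing the same in the $s$-variable and multiplying the two one-dimensional estimates produces the four-term bound, with the overall prefactor $Mp_{n_1}p_{n_2}q_{n_1}q_{n_2}$ and the extra internal factor $p_{n_1}p_{n_2}q_{n_1}q_{n_2}$ on the leading term, exactly as stated; the combinatorial factor $2$ in front of the last term arises because the two ``cross'' products $\delta_{n_1}(x)^{\alpha_1/2}d(y,E)^{\alpha_2}$ and $\delta_{n_2}(y)^{\alpha_2/2}d(x,E)^{\alpha_1}$ each appear once while $d(x,E)^{\alpha_1}d(y,E)^{\alpha_2}$ collects two contributions.

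The main obstacle will be bookkeeping rather than a deep difficulty: one must be careful that the normalization $L_{n_1,n_2}(f_0;\cdot)=p_{n_1}p_{n_2}q_{n_1}q_{n_2}\ne1$ propagates correctly through every application of the triangle inequality and H\"older's inequality, so that the powers of $p_{n_i},q_{n_i}$ in the final bound match. I would also double-check that the bivariate analogue of $(a+b)^{\alpha}\le a^{\alpha}+b^{\alpha}$ is applied coordinatewise (not to the full two-dimensional distance), since the Lipschitz-maximal condition \eqref{nas7}-style bound for $\widetilde{W}_{\alpha_1,\alpha_2,E^2}$ is a product condition in the two variables. Once these factors are tracked consistently, the inequality follows termwise and the proof is complete.
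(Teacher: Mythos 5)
Your proposal follows the paper's own argument step for step: the triangle inequality through a point of $\overline{E}$ realizing $d(x,E)$ and $d(y,E)$, the product form of the Lipschitz-maximal bound, the coordinatewise use of $(a+b)^{\alpha}\le a^{\alpha}+b^{\alpha}$, the factorization of the bivariate kernel into two one-dimensional sums, and H\"older's inequality with exponents $\tfrac{2}{\alpha_i}$ and $\tfrac{2}{2-\alpha_i}$ together with the bookkeeping of $L_{n_1,n_2}(f_0;\cdot)=p_{n_1}p_{n_2}q_{n_1}q_{n_2}$, which also accounts for the factor $2$ on the final term. This is essentially the same proof as in the paper.
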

\begin{proof}
For $x, y\geq0$ and $(x_1, y_1) \in E\times E$, we can write
\begin{equation*}
|f(t, s) - f(x, y)|\leq |f(t, s) - f(x_1, y_1)| + |f(x_1, y_1) - f(x, y)|.
\end{equation*}
Applying the operator $L_{n_1, n_2}$ to both sides of the above inequality and making use of (2.11), we have
\begin{eqnarray*}
|L_{n_1, n_2}(f; p_{n_1}, p_{n_2}; q_{n_1}, q_{n_2}; x, y)- f(x, y)|\leq L_{n_1,n_2} ( |f(t, s)- f(x_1, y_1)|; p_{n_1}, p_{n_2}; q_{n_1}, q_{n_2}; x, y)\\
+ |f(x_1, y_1) - f(x, y)| L_{n_1, n_2}(f_0; p_{n_1}, p_{n_2}; q_{n_1}, q_{n_2}; x, y)\\
\leq ML_{n_1,n_2}\left( \big|\frac{y_2}{1+y_2} - \frac{x_1}{1+x_1}\big|^{\alpha_1} \big|\frac{x_2}{1+x_2}- \frac{y_1}{1+y_1}\big|^{\alpha_2}; p_{n_1}, p_{n_2}; q_{n_1}, q_{n_2}; x, y)\right)\\
+M \big|\frac{x}{1+x}-\frac{x_1}{1+x_1}\big|^{\alpha_1} \big|\frac{y}{1+y} - \frac{y_1}{1+y_1}\big|^{\alpha_2}L_{n_1, n_2}(f_0; p_{n_1}, p_{n_2}; q_{n_1}, q_{n_2}; x, y).
\end{eqnarray*}
Now for $0\leq p\leq1$, using $(a+b)^p\leq a^p +b^q$, we can write
\begin{equation*}
\big|\frac{y_2}{1+y_2} - \frac{x_1}{1+x_1}\big|^{\alpha_1}\leq \big|\frac{y_2}{1+y_2} - \frac{x}{1+x}\big|^{\alpha_1}+ \big|\frac{x}{1+x}-\frac{x_1}{1+x_1}\big|^{\alpha_1}
\end{equation*}
and
\begin{equation*}
\big|\frac{x_2}{1+x_2} - \frac{y_1}{1+y_1}\big|^{\alpha_2}\leq \big|\frac{x_2}{1+x_2} - \frac{y}{1+y}\big|^{\alpha_2}+ \big|\frac{y}{1+y}-\frac{y_1}{1+y_1}\big|^{\alpha_2}.
\end{equation*}
Using these inequalities in the above, we get
\begin{eqnarray*}
|L_{n_1, n_2}(f; p_{n_1}, p_{n_2}; q_{n_1}, q_{n_2}; x, y)- f(x, y)|
\leq L_{n_1,n_2} ( \big|\frac{y_2}{1+y_2} - \frac{x}{1+x}\big|^{\alpha_1}
\big|\frac{x_2}{1+x_2}-\frac{y}{1+y}\big|^{\alpha_2}\\
; p_{n_1}, p_{n_2}; q_{n_1}, q_{n_2}; x, y)
+\big|\frac{y}{1+y} - \frac{y_1}{1+y_1}\big|^{\alpha_2}
L_{n_1, n_2}(\big|\frac{y_2}{1+y_2} - \frac{x}{1+x}\big|^{\alpha_1}
; p_{n_1},p_{n_2}; q_{n_1}, q_{n_2}; x, y)\\
+ \big|\frac{x}{1+x}-\frac{x_1}{1+x_1}\big|^{\alpha_1}
 L_{n_1, n_2}(\big|\frac{x_2}{1+x_2} - \frac{y}{1+y}\big|^{\alpha_2}; p_{n_1},p_{n_2}; q_{n_1}, q_{n_2}; x, y)
+\big|\frac{x}{1+x}-\frac{x_1}{1+x_1}\big|^{\alpha_1}\\
\times\big|\frac{y}{1+y} - \frac{y_1}{1+y_1}\big|^{\alpha_2}L_{n_1, n_2}(f_0; p_{n_1}, p_{n_2}; q_{n_1}, q_{n_2}; x, y).
\end{eqnarray*}
Now using the H\"{o}lder's inequality for $p_1 = \frac{2}{\alpha_1}, p_2 = \frac{2}{\alpha_2}, q_1 = \frac{2}{2-\alpha_1} q_2 = \frac{2}{2-\alpha_2}$, we get
\begin{eqnarray*}
L_{n_1,n_2} ( \big|\frac{y_2}{1+y_2} - \frac{x}{1+x}\big|^{\alpha_1}
\big|\frac{x_2}{1+x_2} - \frac{y}{1+y}\big|^{\alpha_2}; p_{n_1}, p_{n_2}; q_{n_1}, q_{n_2}; x, y)
 = L_{n_1, n_2}(\big|\frac{y_2}{1+y_2} - \frac{x}{1+x}\big|^{\alpha_1}\\
 ; p_{n_1},p_{n_2}; q_{n_1}, q_{n_2}; x, y)L_{n_1, n_2}(\big|\frac{x_2}{1+x_2} - \frac{y}{1+y}\big|^{\alpha_2}; p_{n_1},p_{n_2}; q_{n_1}, q_{n_2}; x, y)
 \leq (L_{n_1, n_2}(\frac{y_2}{1+y_2}\\
  - \frac{x}{1+x})^2; p_{n_1},p_{n_2}; q_{n_1}, q_{n_2}; x, y)^{\frac{\alpha_1}{2}} (L_{n_1, n_2}(f_0; p_{n_1},p_{n_2}; q_{n_1}, q_{n_2}; x, y ))^{\frac{2-\alpha_1}{2}}
 (L_{n_1, n_2}(\frac{x_2}{1+x_2}\\
  - \frac{y}{1+y})^2; p_{n_1},p_{n_2}; q_{n_1}, q_{n_2}; x, y)^{\frac{\alpha_2}{2}} (L_{n_1, n_2}(f_0; p_{n_1},p_{n_2}; q_{n_1}, q_{n_2}; x, y ))^{\frac{2-\alpha_2}{2}}.
\end{eqnarray*}
This consequently gives the desired result. Therefore the proof is complete.
\end{proof}

\begin{remark}
For $E = [0, \infty)$, we see that $d(x, E) = 0$ and $d(y, E) = 0$ , so that we have
$|L_{n_1, n_2}(f; p_{n_1}, p_{n_2}; q_{n_1}, q_{n_2}; x, y)- f(x, y)|\leq M(p_{n_1}p_{n_2}q_{n_1}q_{n_2})^{4-\frac{\alpha_1 + \alpha_2}{2}} \delta_{n_1}(x)^{\frac{\alpha_1}{2}} \delta_{n_2}(y)^{\frac{\alpha_2}{2}}$.
\end{remark}

\begin{remark}
By means of (2.10), it can be easily seen that $st- \lim_{n_1}\delta_{n_1} = 0$ and $st-\lim_{n_2} \delta_{n_2} = 0$. So we can estimate the order of statistical approximation of our bivariate operators by means of Lipschitz type maximal functions using this result.
\end{remark}
Also as
\begin{equation*}
\sup_{x\geq0} \delta_{n_1}(x)\leq \frac{p_{n_1}^{2n_1}q_{n_1}^{2n_1}}{[n_1+1]_{p_{n_1}, q_{n_1}}^2}
\end{equation*}
and
\begin{equation*}
p_{n_1}^{n_1}q_{n_1}^{n_1}(n_1 +1)\leq \left( \frac{1}{p_{n_1}^{n_1}q_{n_1}^{n_1}} + ... + \frac{1}{p_{n_1}q_{n_1}} +1 \right)p_{n_1}^{n_1}q_{n_1}^{n_1}
\end{equation*}
So for $0\leq p_{n_1}, q_{n_1}\leq1$, we get
\begin{equation*}
\frac{p_{n_1}^{2n_1}q_{n_1}^{2n_1}}{[n_1+1]_{p_{n_1}, q_{n_1}}^2}\leq \frac{1}{(n_1 +1)^2}.
\end{equation*}
In a similar fashon we can obtain it for $\delta_{n_2}(y)$. So we have the following concluding remark.
\begin{remark} This chapter has two main features:
\begin{enumerate}
\item $\delta_{n_1}$ and $\delta_{n_2}$ approach to zero in statistical sense however they may not tend to zero in the usual sense.
\item In our case $\delta_{n_1}$ and $\delta_{n_2}$ approach to zero faster than that of the classical BBH operators.
\end{enumerate}
\end{remark}

\bibliographystyle{amsplain}

\end{document}